\documentclass[11pt,letterpaper]{amsart}

\usepackage{tikz}
\tikzstyle{every node}=[circle, draw, fill=black!50,
                        inner sep=0pt, minimum width=3pt]

\usepackage{amsfonts,amsmath,latexsym,color,epsfig,hyperref,enumitem, amssymb,bbm, amsthm, mathrsfs}
\usepackage{verbatim}
\usepackage{macro}
\usepackage{asymptote}

	\addtolength{\oddsidemargin}{-.875in}
	\addtolength{\evensidemargin}{-.875in}
	\addtolength{\textwidth}{1.75in}

	\addtolength{\topmargin}{-.50in}
	\addtolength{\textheight}{1in}

\renewcommand{\leq}{\leqslant}
\renewcommand{\geq}{\geqslant}

\newcommand{\sF}{\mathscr{F}}

\def\qed{\ifvmode\mbox{ }\else\unskip\fi\hskip 1em plus 10fill$\Box$}

\input{epsf}

\makeatletter
\def\Ddots{\mathinner{\mkern1mu\raise\p@
\vbox{\kern7\p@\hbox{.}}\mkern2mu
\raise4\p@\hbox{.}\mkern2mu\raise7\p@\hbox{.}\mkern1mu}}
\makeatother

\def\R{\mathbb R}

\def\C{\mathbb C}
\def\F{\mathbb F}

\providecommand{\customgenericname}{}
\newcommand{\newcustomtheorem}[2]{%
  \newenvironment{#1}[1]
  {%
   \renewcommand\customgenericname{#2}%
   \renewcommand\theinnercustomgeneric{##1}%
   \innercustomgeneric
  }
  {\endinnercustomgeneric}
}

\newcustomtheorem{customthm}{Conjecture}
\newcustomtheorem{customlemma}{Lemma}

\title{Colorful Helly via induced matchings} 

\author{Cosmin Pohoata}
\thanks{Department of Mathematics, Emory University, Atlanta, GA. Email: {\tt cosmin.pohoata@emory.edu}. Research supported by NSF Award DMS-2246659.}

\author{Kevin Yang}
\thanks{Skyline High School, Sammamish, WA. Email: {\tt yangkev27@issaquah.wednet.edu}.}

\author{Shengtong Zhang}
\thanks{Department of Mathematics, Stanford University, Stanford, CA. Email: {\tt stzh1555@stanford.edu}.}

\begin{document}
\maketitle

\begin{abstract}
     We establish a theorem regarding the maximum size of an {\it{induced}} matching in the bipartite complement of the incidence graph of a set system $(X,\sF)$. We show that this quantity plus one provides an upper bound on the colorful Helly number of this set system, i.e. the minimum positive integer $N$ for which the following statement holds: if finite subfamilies $\sF_1,\ldots, \sF_{N} \subset \sF$  are such that $\cap_{F \in \sF_{i}} F = 0$ for every $i=1,\ldots,N$, then there exists $F_i \in \sF_i$ such that $F_1 \cap \ldots \cap F_{N} = \emptyset$. We will also discuss some natural refinements of this result and applications.
\end{abstract}

\section{Introduction}


In 1913, Helly famously proved that for every collection of convex sets $C_1,\ldots,C_n$ in $\mathbb{R}^{d}$ with an empty intersect, there exists a set $S \subset [n]$ of size $|S| \leq d+1$ with $\bigcap_{i \in S} C_i = \emptyset$. Moreover, $d+1$ is the smallest number with this property. This result is known as Helly's theorem and is fundamental result in combinatorial geometry, with a large number of generalizations and applications over the years. See for example \cite{Ba}, \cite{DGK}, \cite{Eckhoff}, \cite{Matousek}, and the references therein. We also recommend the surveys \cite{ALS} and \cite{BK}, focusing on more recent developments. To state some of these different variants of Helly's theorem, it is often standard to introduce the following terminology. For a set $X$ and a family $\sF$ of subsets of $X$, the {\it{Helly number}} $h = h(X,\sF) \in \mathbb{N}$ of $\sF$ is defined as the smallest integer $h$ such that for any finite subfamily $K$ of $\sF$, if every subset of at most $h$ members of $\sF$ has a nonempty intersection then all sets in $K$ have a nonempty intersection. If no such $h$ exists, then $h(X,\sF) := \infty$. Helly's theorem above then asserts that the Helly number of the family of convex sets in $\mathbb{R}^{d}$ is $d+1$. Another example of a known Helly number is the Helly number of spheres in $\mathbb{R}^{d}$. In \cite{Maehara}, Maehara proved that this equals $d+2$. An additional example is the theorem of Doignon \cite{Doignon} which asserts that the Helly number of convex lattice sets in $\mathbb{R}^{d}$, that is sets of the form $C \cap \mathbb{Z}^{d}$ where $C$ is a convex set in $\mathbb{R}^{d}$ is $2^{d}$. A combinatorial example is
the fact that the Helly number of the collection of (sets of vertices of) subtrees of any given tree is $2$. 

An algebraic example that in some sense motivated the present paper is the fact that for any field $\mathbb{F}$ the Helly number of hypersurfaces of degree at most $D$ in $\mathbb{F}^{d}$ is ${D+d \choose d}$. This was proved in 1986 by Deza and Frankl \cite{DezaFrankl} in the following elegant way: for any set of polynomials $f_1,\ldots,f_n$ of degree $\leq D$ in $\mathbb{F}[x_1,\ldots,x_d]$ with the property that their zero sets $Z(f_i) := \left\{ x \in \mathbb{F}^{d}: f_i(x) = 0\right\}$ satisfy $\bigcap_{i=1}^{n} Z(f_i) = \emptyset$, consider a {\it{minimal}} subset $S \subset [n]$ such that $\bigcap_{i \in S} Z(f_i) = \emptyset$. By definition, this is a set $S$ which must also satisfy the property that for every $j \in S$, the intersection $\bigcap_{i \in S \setminus \{j\}} Z(f_i)$ is nonempty. In other words, for every $j \in S$ there exists $x_j \in \mathbb{F}^{d}$ such that: 
\begin{equation} \label{matching}
    f_{i}(x_j) \neq 0\ \text{if and only if}\ i \neq j.
\end{equation}
By a (nowadays) standard argument, it is not difficult to see that this condition implies that the set of polynomials $\left\{f_{i}\right\}_{i \in S}$ is a set of linearly independent polynomials in $\mathbb{F}[x_1,\ldots,x_d]$. This yields the estimate $|S| \leq \binom{D+d}{d}$, since $\binom{D+d}{d}$ equals precisely the dimension of the vector space of polynomials of degree at most $D$ in $\mathbb{F}[x_1,\ldots,x_d]$. In \cite{Frankl}, Frankl subsequently extended this argument to varieties of bounded degree. It is perhaps instructive to highlight at this point that a similar strategy cannot be used to prove Helly's original theorem for convex sets. In particular, if $d \geq 2$ and $C_1,\ldots,C_{n}$ is a collection of convex sets in $\mathbb{R}^{d}$ such that $\bigcap_{i=1}^{n} C_{i} =\emptyset$, then it is no longer true that the size of a {\it{minimal}} set $S \subset [n]$ with the property that $\bigcap_{i \in S} C_i = \emptyset$ has to be bounded independently of $n$ (rather than a {\it{minimum}} set). Indeed, note that while for every $j \in S$ there exists a point $x_j \in \mathbb{R}^{d}$ such that
\begin{equation} \label{bad}
    x_j \not\in C_i\ \text{if and only if}\ i \neq j,
\end{equation}
this condition no longer implies that $|S|=O_{d}(1)$ (let alone $|S| \leq d+1$).\footnote{Indeed, note that for any $d \geq 2$ and any arbitrarily large $m \geq 1$ one can construct $m$ convex sets $C_1,\,\ldots,\,C_m\in\mathbb{R}^d$ such that for every $i\in[m]$ there exists $x_i$ such that $x_i \in \bigcap_{j\neq i}C_j\setminus C_i$. 
Let $P$ be a convex polytope with at least $m$ facets. Pick $m$ facets $f_1,\,\ldots,\,f_m$ and let $C_i$ be the half-space bounded by the hyperplane supporting $f_i$ and containing $P$. Let $x_i$ be any point in the polytope determined by the hyperplane supporting $f_i$, and the hyperplane supporting all the neighbor facets of $f_i$. By construction, $x_i\in C_j$ if and only if $i\neq j$.} 
Nevertheless, this type of argument turns out to be quite powerful in settings where conditions like \eqref{matching} or \eqref{bad} do generate a set of $|S|$ linear independent objects in a vector space of bounded dimension. For example, in a recent nice paper \cite{AJS}, Alon, Jin, and Sudakov also used this strategy to determine that the Helly number of the set of Hamming balls of radius $t$ inside an $n$-dimensional space $X^{n}$ is $2^{t+1}$ (where $X$ is an arbitrary finite or infinite set). 

In this paper, we generalize this story by showing that in a set system exhibiting conditions like \eqref{matching} or \eqref{bad} one can also establish an optimal colorful Helly theorem. To state things more precisely going forward, it will be convenient to consider the following abstract setting. Let $X$ be a set and let $\sF$ be a collection of subsets of elements from $X$. We define the colorful Helly number of the set system $(X,\sF)$ to be the minimum positive integer $\eta=\eta(X,\,\sF)$ for which the following statement holds: if finite subfamilies $\sF_1,\,\ldots,\, \sF_{\eta} \subset \sF$ are such that $\cap_{F \in \sF_{i}} F = \emptyset$ for every $i=1,\,\ldots,\,\eta$, then there exists $F_i \in \sF_i$ such that $F_1 \cap \cdots \cap F_{\eta} = \emptyset$. 



Like before, if no such $\eta$ exists, then we say $\eta(X,\,\sF):=\infty$. It is a celebrated theorem of Lov\'asz (whose proof first appeared in a paper by B\'ar\'any \cite{Ba82}) that if $X = \mathbb{R}^{d}$ and $\sF$ is the family of convex sets in $\mathbb{R}^{d}$ then $\eta(X,\,\sF) = d+1$. This generalization of Helly's theorem has managed to inspire a lot of further interesting research directions on its own, due to its rather mysterious geometric nature. For example, while clearly $h(X,\sF) \leq \eta(X,\sF)$, it is not always the case that $h(X,\sF) = \eta(X,\sF)$. Let $X = \mathbb{R}^{d}$ and let $\sF$ consist of a family of axis-parallel boxes: it is a folklore result that in this case $h(X,\sF)=2$, while it is not difficult to see that $\eta(X,\sF)=d+1$.\footnote{Lov\'asz's theorem implies $\eta(X,\sF) \leq d+1$, whereas $\eta(X,\sF)\geq d+1$ follows by considering the $d$-coloring of the facets of a hypercube in $\mathbb{R}^{d}$ so that each opposite pair of facets carry the same color. This is a coloring with $d$ color classes $\sF_{1},\ldots,\sF_{d}$ where $\cap_{F \in \sF_{i}} F = 0$ for every $i=1,\,\ldots,\,\eta$, but such that  $F_1 \cap \cdots \cap F_{d} \neq \emptyset$ for every $F_i \in \sF_i$.} It is therefore natural to ask which Helly-type theorems admit appropriate colorful versions (see for example the survey of Amenta, De Loera, and Sober\'on \cite[Section 2.2]{ALS} for more context). 

Our results below address this question by providing several natural sufficient conditions for the existence of a colorful Helly theorem. We will work with the incidence graph of $X$ and $\sF$, which is the bipartite graph $G_{X,\,\sF}$ on $X \cup \sF$ where $(x,\,F) \in X \times \sF$ is an edge if $x \in F$. The bipartite complement $\overline{G}_{X,\,\sF}$ of $G_{X,\,\sF}$ denotes the graph with vertex set $X \cup \sF$ where $(x,\,F) \in X \times \sF$ is an edge if $x \not \in F$. We will also be concerned with the size $\tau=\tau(X,\,\sF)$ of the maximum induced matching inside $\overline{G}_{X,\,\sF}$. We shall sometimes refer to a matching in $\overline{G}_{X,\,\sF}$ as a {\it{comatching}} of the set system $(X,\sF)$, and to $\tau(X,\,\sF)$ as the size of the largest comatching in $(X,\sF)$ (or simply, the comatching number of $(X,\sF)$). We are now ready to state our first result.

 \begin{theorem} \label{main}
Let $X$ be a set and let $\sF$ be a collection of subsets of $X$. Then we have
$$\eta(X,\,\sF) \leq 1+ \tau(X,\,\sF).$$
 \end{theorem}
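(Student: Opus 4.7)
The plan is to argue by contradiction: set $N=1+\tau(X,\sF)$ and suppose that finite subfamilies $\sF_1,\ldots,\sF_N\subset\sF$ satisfy $\bigcap_{F\in\sF_i}F=\emptyset$ for every $i$, yet every colorful choice $(F_1,\ldots,F_N)\in\sF_1\times\cdots\times\sF_N$ has $F_1\cap\cdots\cap F_N\neq\emptyset$. From this configuration I will produce an induced matching of size $N$ in $\overline{G}_{X,\sF}$, contradicting the maximality of $\tau(X,\sF)$. The desired matching will have the shape prescribed by \eqref{matching}: pairs $(x_i,G_i)$ with $G_i\in\sF_i$, $x_i\notin G_i$, and $x_i\in G_j$ for all $j\neq i$.

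The heart of the argument is an extremal selection of the representatives $G_i$. I would look at the finite family
$$\mathcal{I}\ =\ \bigl\{F_1\cap\cdots\cap F_N:\, F_i\in\sF_i\text{ for each }i\bigr\},$$
each of whose elements is nonempty by our contradiction hypothesis. I would then fix any tuple $(G_1,\ldots,G_N)$ for which $I_0:=G_1\cap\cdots\cap G_N$ is minimal under set inclusion inside $\mathcal{I}$; such an $I_0$ exists because $\mathcal{I}$ has at most $\prod_i|\sF_i|$ elements.

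The main claim, and what I expect to be the most delicate step, is that $\bigcap_{j\neq i}G_j\not\subseteq G_i$ for every $i\in[N]$. To prove it I would argue again by contradiction: if $\bigcap_{j\neq i}G_j\subseteq G_i$, then $\bigcap_{j\neq i}G_j=I_0$, so for any alternative $G_i'\in\sF_i$ the set $I_0\cap G_i'=\bigl(\bigcap_{j\neq i}G_j\bigr)\cap G_i'$ lies in $\mathcal{I}$ and is contained in $I_0$. Minimality of $I_0$ forces $I_0\cap G_i'=I_0$, i.e.\ $I_0\subseteq G_i'$. Letting $G_i'$ range over $\sF_i$ would give $I_0\subseteq\bigcap_{F\in\sF_i}F=\emptyset$, contradicting nonemptiness of $I_0$.

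With the claim in hand, I would pick $x_i\in(\bigcap_{j\neq i}G_j)\setminus G_i$ for each $i\in[N]$, so that $x_i\notin G_i$ while $x_i\in G_j$ for $j\neq i$. Any coincidence $x_i=x_{i'}$ or $G_i=G_{i'}$ with $i\neq i'$ would immediately yield both $x_i\in G_{i'}$ and $x_i\notin G_{i'}$, a contradiction; hence the $x_i$'s are pairwise distinct and so are the $G_i$'s. Thus the $N$ pairs $(x_i,G_i)$ form an induced matching of size $N$ in $\overline{G}_{X,\sF}$, violating $\tau(X,\sF)=N-1$.
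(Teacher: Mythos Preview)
Your proof is correct and follows essentially the same approach as the paper: select a colorful tuple $(G_1,\ldots,G_N)$ whose intersection is inclusion-minimal, analyze whether $\bigcap_{j\neq i}G_j\subseteq G_i$, and derive a contradiction via a comatching of size $N$. The only organizational difference is the direction of the contradiction: you use minimality to show that \emph{no} index $i$ satisfies the containment (and then build the comatching), whereas the paper argues that if \emph{no} index satisfied the containment one would already have a too-large comatching, hence \emph{some} index does, and then uses that index to produce a strictly smaller colorful intersection, contradicting minimality. These are two phrasings of the same extremal argument. One small remark: the paper actually proves the marginally stronger bound $\eta\le 1+\tau'$ (Theorem~\ref{main'}), since any point of the nonempty $I_0$ serves as a common point for the comatching; your argument yields this refinement verbatim once you note that $I_0\neq\emptyset$ supplies the extra point $x_{N+1}$.
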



As we will see in Section 2, this result is optimal and even the additive $+1$ term is necessary. Furthermore, in certain situations we can remove the assumption that the $\sF_i$'s are finite. Let $\eta^\infty(X, \sF)$ be the colorful Helly number with the finiteness assumption on $\sF_i$ removed. Clearly, we have $\eta(X, \sF) \leq \eta^\infty(X, \sF)$.

We say $\sF$ is \textit{Noetherian} if for any sequence of sets $F_1, F_2,\ldots$ in $\sF$, the descending chain of sets
$$F_1 \supset F_1 \cap F_2 \supset F_1 \cap F_2 \cap F_3 \supset \cdots$$
stabilizes. For example, we have
\begin{itemize}
    \item If $X$ is finite, then $\sF$ is Noetherian.
    \item If $X$ is $\C^n$ or $\R^n$, and $\sF$ is any family of algebraic varieties in $X$, then $\sF$ is Noetherian due to the Nullstellensatz and the fact that $\C[x_1,\ldots, x_n]$ is a Noetherian ring.
    \item If $X$ is $\R^2$, and $\sF$ is the family of convex sets, then $\sF$ not Noetherian. We can check that $\eta(X, \sF) = 3$ but $\eta^\infty(X, \sF) = \infty$.
\end{itemize}
If $\sF$ is Noetherian, then the finiteness assumption on $\sF_i$ is not necessary. While this observation is not hard to prove, we are unaware of any literature that discusses this subtlety. 
\begin{proposition} \label{noetherian}
    If a set family $\sF$ on $X$ is Noetherian, then we have
    $$\eta^\infty(X, \sF) = \eta(X, \sF).$$
\end{proposition}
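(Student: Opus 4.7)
The plan is to reduce the infinite colorful Helly statement to the finite one. Since $\eta(X,\sF) \le \eta^\infty(X,\sF)$ is immediate (any finite subfamily is a special case of a subfamily), I only need to establish the reverse inequality $\eta^\infty(X,\sF) \le \eta(X,\sF)$. So I would start with arbitrary (possibly infinite) subfamilies $\sF_1,\ldots,\sF_\eta \subset \sF$ with $\cap_{F \in \sF_i} F = \emptyset$ for each $i$, and try to replace each $\sF_i$ by a finite subfamily $\sF_i' \subseteq \sF_i$ with the same (empty) intersection. Once that is done, the finite colorful Helly theorem applied to $\sF_1', \ldots, \sF_\eta'$ produces the required $F_i \in \sF_i' \subseteq \sF_i$ with $F_1 \cap \cdots \cap F_\eta = \emptyset$.

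The one nontrivial step is therefore the following lemma: \emph{if $\sF$ is Noetherian, then every subfamily $\sG \subseteq \sF$ admits a finite subfamily $\sG' \subseteq \sG$ with $\cap_{F \in \sG'} F = \cap_{F \in \sG} F$.} I would prove this by contradiction. If no finite subfamily achieves the full intersection, I build a sequence $F_1, F_2, \ldots \in \sG$ greedily: having chosen $F_1,\ldots,F_n$, the finite intersection $\cap_{j=1}^n F_j$ strictly contains $\cap_{F \in \sG} F$, so (since otherwise every $F \in \sG$ would contain $\cap_{j=1}^n F_j$, forcing $\cap_{F \in \sG} F \supseteq \cap_{j=1}^n F_j$) there exists $F_{n+1} \in \sG$ with $\cap_{j=1}^{n+1} F_j \subsetneq \cap_{j=1}^n F_j$. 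This yields a non-stabilizing descending chain, contradicting the Noetherian hypothesis.

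Putting the two pieces together finishes the proof: apply the lemma with $\sG = \sF_i$ to obtain finite $\sF_i'$ with $\cap_{F \in \sF_i'} F = \emptyset$, then invoke $\eta(X,\sF)$. I expect the only mild subtlety to be the verification that the greedy step is always available — i.e., the contrapositive observation that if no admissible $F_{n+1}$ exists, then $\cap_{j=1}^n F_j$ already equals $\cap_{F \in \sG} F$ — but this is a one-line check. No choice principle beyond ordinary dependent choice is needed, since the construction is sequential.
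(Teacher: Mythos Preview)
Your proposal is correct and follows essentially the same approach as the paper: reduce each (possibly infinite) $\sF_i$ to a finite $\sF_i' \subseteq \sF_i$ with the same empty intersection via the Noetherian hypothesis, then invoke the finite colorful Helly number. The paper states the existence of such $\sF_i'$ in one line without spelling out the greedy chain argument, but your more detailed lemma is exactly the intended justification.
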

In addition, we can consider a slightly stronger notion of $\tau(X, \sF)$. Let $\tau'(X, \sF)$ be the largest $\tau'$ such that there exist $x_1,\ldots, x_{\tau' + 1} \in X$ and $F_1,\ldots, F_{\tau'}$ such that $x_i \in F_j$ if and only if $i \neq j$. We call this a \textit{comatching with intersection}, and $x_{\tau' + 1}$ the \textit{common point.} Clearly, we have
$$\tau'(X, \sF) \in \{\tau(X, \sF) - 1, \tau(X, \sF) \}.$$
Using this terminology, we will show the following refinement of Theorem \ref{main}, which is in some sense our main result.

\begin{theorem} \label{main'}
For any set family $\sF$ on $X$, we have
$$\eta(X, \sF) \leq 1 + \tau'(X, \sF).$$
In particular, if $\tau'(X, \sF) = \tau(X, \sF) - 1$, then $\eta(X, \sF) = 1 + \tau'(X, \sF) = \tau(X, \sF)$.
\end{theorem}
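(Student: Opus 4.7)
The plan is to prove the upper bound $\eta(X,\sF)\le 1+\tau'(X,\sF)$ via a minimal-intersection argument in the spirit of the Deza--Frankl selection, and then to derive the equality $\eta=\tau$ in the case $\tau'=\tau-1$ by building a bad colorful configuration with $\tau-1$ colors directly from a maximum induced matching.

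For the main inequality, set $N=\eta(X,\sF)-1$ and invoke the failure of the colorful Helly property at $N$ colors to obtain finite subfamilies $\sF_1,\ldots,\sF_N\subset\sF$ with $\bigcap_{F\in\sF_i}F=\emptyset$ for each $i$, yet $F_1\cap\cdots\cap F_N\neq\emptyset$ for every choice of $F_j\in\sF_j$. Since each $\sF_i$ is finite, only finitely many colorful transversals exist, so I can pick one $(F_1,\ldots,F_N)$ whose intersection $T:=F_1\cap\cdots\cap F_N$ is minimal under set inclusion. The crucial claim is that this transversal is \emph{critical}: $T_i:=\bigcap_{j\neq i}F_j$ strictly contains $T$ for every $i$. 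If this failed, so that $T_i=T$ for some $i$, then fixing any $y\in T$ (which exists by colorful failure), the hypothesis $\bigcap_{F\in\sF_i}F=\emptyset$ would produce some $F_i'\in\sF_i$ with $y\notin F_i'$, and the transversal obtained by swapping $F_i$ for $F_i'$ would have intersection $T\cap F_i'\subsetneq T$, contradicting minimality. Once criticality is in hand, take $x_{N+1}\in T$ and, for each $i\le N$, pick $x_i\in T_i\setminus F_i$; the tuple $(x_1,\ldots,x_{N+1};F_1,\ldots,F_N)$ is a comatching with intersection of size $N$, so $\tau'(X,\sF)\ge N=\eta(X,\sF)-1$.

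For the ``in particular'' assertion, assume $\tau'=\tau-1$. The bound just established yields $\eta\le 1+\tau'=\tau$, so it remains to produce the matching lower bound $\eta\ge\tau$. Fix a maximum induced matching $(x_1,F_1),\ldots,(x_\tau,F_\tau)$ in $\overline{G}_{X,\sF}$; since $\tau'<\tau$, the intersection $F_1\cap\cdots\cap F_\tau$ must be empty, as otherwise any common point would promote the matching to a comatching with intersection of size $\tau$. Now take $\sF_i:=\{F_1,\ldots,F_\tau\}$ for each $i\in\{1,\ldots,\tau-1\}$; every $\sF_i$ has empty intersection. For any colorful transversal $(G_1,\ldots,G_{\tau-1})$ with $G_i\in\sF_i$, pigeonhole forces some $F_k$ to be absent from $\{G_1,\ldots,G_{\tau-1}\}$, and then $x_k\in G_i$ for every $i$ (since $x_k\in F_j$ for all $j\neq k$), so the transversal has nonempty intersection. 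This exhibits a bad colorful configuration with $\tau-1$ colors and establishes $\eta\ge\tau$.

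The only step with any real content is the criticality argument in the first paragraph. The delicate point is that the swap $F_i\to F_i'$ genuinely shrinks $T$—this needs both $\bigcap_{F\in\sF_i}F=\emptyset$ (to pick $F_i'$ missing a prescribed point of $T$) and $T\neq\emptyset$ (guaranteed by the colorful failure). Everything else, including the pigeonhole construction in the second part, is elementary bookkeeping.
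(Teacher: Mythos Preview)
Your proof is correct and follows essentially the same route as the paper's own argument: both hinge on taking a colorful transversal with inclusion-minimal intersection (using finiteness of the $\sF_i$), observing via the swap $F_i\mapsto F_i'$ that minimality forces $\bigcap_{j\neq i}F_j\supsetneq T$ for every $i$, and then reading off a comatching with intersection; the lower bound in the ``in particular'' clause is likewise obtained in both by taking $\tau-1$ copies of a maximum comatching and pigeonholing. The only cosmetic difference is that the paper phrases the first part as a direct implication (given $\tau'+1$ families, find an empty transversal) whereas you run the contrapositive (from a failure at $N$ colors, extract a size-$N$ comatching with intersection); the underlying mechanism is identical.
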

Theorem \ref{main'} has a few new consequences, which we will discuss in detail in Section 3. First, we will argue that Theorem \ref{main'} implies that the colorful Helly number for $d$-dimensional spheres in $\mathbb{R}^{d}$ is $d+2$, generalizing the work of Maehara \cite{Maehara}. Second, we will show that Theorem \ref{main'} establishes an optimal colorful Helly theorem for any set of hypersurfaces of bounded degree, generalizing the works of Motzkin \cite{Motzkin} and Deza-Frankl \cite{DezaFrankl}. Last but not least, we will show that the colorful Helly number of the set of Hamming balls of radius $t$ inside an $n$-dimensional space $X^{n}$ is $2^{t+1}$, generalizing the work of Alon, Jin, and Sudakov \cite{AJS}. Turns out, it is also possible to prove the first two consequences of Theorem \ref{main'} using some more ad-hoc arguments. These arguments have some shortcomings, but we considered them to be quite instructive so we will discuss them in Section 2.

It is also natural to wonder whether set systems with bounded comatching number satisfy a so-called fractional Helly theorem. In the context of Helly’s theorem for convex sets, this theorem was first established by Katchalski and Liu in \cite{KL}: if $\sF$ is a family of $n \geq d + 1$ convex sets in $\mathbb{R}^{d}$ such that the number of $(d + 1)$-tuples of $\sF$ with non-empty intersection is
at least $\alpha {n \choose d+1}$ for some constant $\alpha > 0$, then there are at least $\beta n$ members of $\sF$ whose intersection is non-empty, where $\beta > 0$ is a constant which depends only
on $\alpha$ and $d$. The optimal choice $\beta=1-(1-\alpha)^{\frac{1}{d+1}}$ was determined by Kalai \cite{Kalai}. In \cite{Holmsen}, Holmsen showed that the the fractional Helly theorem can be derived
as a purely combinatorial consequence of the colorful Helly theorem, and then later Holmsen and Lee \cite{HolmsenLee} used this result to prove that a fractional Helly theorem holds in all set systems with a bounded Radon number. We note that an abstract fractional Helly theorem for sets with bounded comatching number also holds. 

\begin{corollary} \label{frac}
Let $(X,\,\sF)$ be a system with finite comatching number $\tau$. For $\alpha\in (0,\,1)$ there exists $\beta=\beta(\alpha,\,\tau)\in (0,\,1)$ with the following property: Let $F$ be a subfamily of $n>\tau+1$ sets in $\sF$ and suppose at least $\alpha\binom{n}{\tau+1}$ of the $(\tau+1)$-tuples in $F$ have nonempty intersection. Then there exists some $\beta n$ members of $F$ whose intersection is non-empty.
\end{corollary}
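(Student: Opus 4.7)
The plan is to derive Corollary~\ref{frac} as a direct combinatorial consequence of Theorem~\ref{main} together with Holmsen's reduction from colorful Helly to fractional Helly. First I would apply Theorem~\ref{main} to obtain $\eta(X,\sF) \leq \tau + 1$, so that $(X,\sF)$ enjoys the colorful Helly property of order $\tau + 1$. Then I would invoke the main theorem of \cite{Holmsen}, which asserts that any abstract set system with colorful Helly number at most $N$ automatically satisfies a fractional Helly theorem of order $N$: for every $\alpha \in (0,1)$ there exists $\beta = \beta(\alpha, N) > 0$ such that, whenever a subfamily of $n$ sets has at least $\alpha\binom{n}{N}$ nonempty $N$-tuples, some $\beta n$ of them share a common point. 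Specializing to $N = \tau + 1$ yields precisely the statement of Corollary~\ref{frac} with $\beta = \beta(\alpha,\tau + 1)$.

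The main point to verify is that Holmsen's reduction applies to our abstract setting without requiring any additional structure on $\sF$. His proof proceeds through a purely hypergraph-theoretic analysis of colorful selections, using probabilistic/double-counting tools on the $(\tau + 1)$-uniform hypergraph whose edges record the nonempty $(\tau + 1)$-tuples from the subfamily, and never appeals to convexity, topology, or any other geometric feature. Similarly, the argument involves only finite subfamilies, so the Noetherian hypothesis from Proposition~\ref{noetherian} is not required here. The only subtlety is to confirm that our definition of the colorful Helly number matches the one used in \cite{Holmsen}; a short check shows the two to agree, since both quantify over arbitrary finite colored subfamilies with empty monochromatic intersections.

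The essentially only conceptual step is therefore to feed the bound from Theorem~\ref{main} into Holmsen's machinery, which handles the rest as a black box. If one wished to avoid this black box, an alternative would be to reproduce Holmsen's hypergraph Tur\'an-type argument directly in our setting, which would also yield an explicit dependence of $\beta$ on $\alpha$ and $\tau$; however, since no such explicit dependence is asked for in the statement of Corollary~\ref{frac}, invoking \cite{Holmsen} is both cleaner and sufficient. The ``hard part'' is thus already absorbed into the proof of Theorem~\ref{main}: once the comatching bound on $\eta$ is in hand, the fractional Helly conclusion is a formal consequence.
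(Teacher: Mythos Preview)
Your approach is correct, and the paper itself explicitly acknowledges it as a valid route: the sentence immediately following Corollary~\ref{frac} says ``While it is also possible to use the result \cite{Holmsen} and derive a fractional Helly theorem using Theorem~\ref{main'}\ldots''. So there is no gap.

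However, the paper actually chooses a different primary argument. Rather than going through Theorem~\ref{main} and Holmsen's colorful-to-fractional reduction, it invokes Matou\v sek's theorem \cite{Matousek2} that the fractional Helly number is at most the dual VC-dimension plus one, and then observes directly that the dual VC-dimension of $(X,\sF)$ is bounded by $\tau'(X,\sF)$: if $A\subset\sF$ is dually shattered, then specializing the shattering witnesses to the subsets $B=A\setminus\{a\}$ for each $a\in A$ together with $B=A$ exhibits $A$ as a comatching with intersection. This route is more self-contained in the sense that it bypasses Theorem~\ref{main} entirely; the comatching bound enters only through the elementary inequality ``dual VC $\le\tau'$'', and the heavy lifting is outsourced to Matou\v sek rather than Holmsen. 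By contrast, your route makes the corollary a genuine application of the paper's main theorem, which is conceptually satisfying but uses the somewhat deeper machinery of \cite{Holmsen}. Both black boxes deliver the same order $\tau+1$; the paper's remark about ``the correct dependence on $\tau$'' seems to be emphasizing that the Matou\v sek route lands exactly on $\tau'+1\le\tau+1$ without any loss.
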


While it is also possible to use the result \cite{Holmsen} and derive a fractional Helly theorem using Theorem \ref{main'}, we note that the statement from Corollary \ref{frac} (with the correct dependence on $\tau$) follows from the work of Matou\v sek \cite{Matousek2}. Matou\v sek showed that the fractional Helly number of a set system is at most the dual VC dimension plus one. On the other hand, the dual VC-dimension of a set system is dominated by our parameter $\tau'$: Suppose we are given a family $A\subset\sF$ which is dual shattered by $X$, meaning, for every $B\subset A$, there exists $x\in\bigcap_{b\in B} B\setminus\bigcap_{a\in A} A$. Taking $B:=A\setminus\{a\}$ for $a\in A$ and $B:=A$ yields that $A$ is a comatching with intersection. 

\section*{Ackowledgement}
We thank Qiuyu Ren for numerous helpful discussions, and proving the $d = 2$ case of Proposition~\ref{prop:no-leray}.


\bigskip

\section{Warm up: Colorful Helly for hypersurfaces}

In this section, we discuss some preliminary results that only require ad-hoc arguments (in hindsight). The first one concerns a colorful version of Helly's theorem for spheres. 

\subsection{Spheres}
In \cite{Maehara}, Maehara proved that the Helly number for $d$-dimensional spheres is $d+2$. We show that the colorful Helly number is equal to the Helly number. We note that we also do not need the finiteness assumption on the families $\sF_i$.

\begin{theorem}[Colorful Helly for spheres] \label{spheres}
Let $\sF_1,\,\ldots,\,\sF_{d+2}$ be families of spheres in $\mathbb{R}^d$. If for every transversal $\{S_i\in\sF_i: 1\leq i\leq d+2\}$, the intersection $\bigcap_{i=1}^{d+2} S_i$ is nonempty, then one of the $\sF_i$ has nonempty intersection.
\end{theorem}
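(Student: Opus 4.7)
The plan is to derive this from Theorem \ref{main'} by bounding the strong comatching number of the family $\sF$ of spheres in $\mathbb{R}^d$: I will show $\tau'(\mathbb{R}^d,\sF)\leq d+1$ via the classical lifting map $\phi\colon \mathbb{R}^d\to\mathbb{R}^{d+1}$ given by $\phi(x)=(x,|x|^2)$. Under this lift, every sphere $S=\{x:|x-c|^2=r^2\}$ corresponds to an affine hyperplane $H\subset\mathbb{R}^{d+1}$ with the property that $x\in S$ if and only if $\phi(x)\in H$: indeed, expanding the defining equation of $S$ gives the linear form $t - 2c\cdot y + |c|^2 - r^2$ in coordinates $(y,t)\in\mathbb{R}^d\times\mathbb{R}$.

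Suppose then we had a comatching with intersection of size $\tau'+1$, i.e.\ points $x_1,\dots,x_{\tau'+1}\in\mathbb{R}^d$ and spheres $S_1,\dots,S_{\tau'}\in\sF$ with $x_i\in S_j \iff i\neq j$. Set $\tilde{x}_i:=\phi(x_i)$ and let $H_j$ be the hyperplane associated to $S_j$, so that $\tilde{x}_i\in H_j \iff i\neq j$. In particular the common point $\tilde{x}_{\tau'+1}$ lies on every $H_j$, so after translating by $-\tilde{x}_{\tau'+1}$ I may assume each $H_j$ is a linear hyperplane through the origin, say $H_j=\{v:\langle a_j,v\rangle=0\}$ for some $a_j\in\mathbb{R}^{d+1}$. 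Writing $v_i:=\tilde{x}_i-\tilde{x}_{\tau'+1}$ for $i\in[\tau']$, the comatching condition becomes $\langle a_j,v_i\rangle = 0 \iff i\neq j$ for all $i,j\in[\tau']$. Thus the $\tau'\times\tau'$ matrix $M$ with entries $M_{ij}=\langle a_j,v_i\rangle$ is diagonal with nonzero diagonal, hence invertible; this forces the vectors $v_1,\dots,v_{\tau'}$ to be linearly independent in $\mathbb{R}^{d+1}$, and so $\tau'\leq d+1$. Applying Theorem \ref{main'} yields $\eta(\mathbb{R}^d,\sF)\leq 1+\tau'(\mathbb{R}^d,\sF) \leq d+2$.

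To handle the fact that the $\sF_i$ in the statement are not assumed finite, I would simply invoke Proposition \ref{noetherian}: every sphere is an algebraic variety in $\mathbb{R}^d$, so the family of spheres is Noetherian, and therefore $\eta^\infty(\mathbb{R}^d,\sF)=\eta(\mathbb{R}^d,\sF)\leq d+2$, as required. I do not anticipate a serious obstacle here: the lifting reduction and the diagonal-matrix linear-independence argument are both classical, and Theorem \ref{main'} absorbs the genuinely colorful part of the argument. The only point to verify carefully is that the common point $x_{\tau'+1}$ of the comatching with intersection lifts to a common point of all the hyperplanes $H_j$, which is immediate from the equivalence $x\in S \iff \phi(x)\in H$.
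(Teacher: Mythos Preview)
Your proof is correct. The paper gives two proofs of this statement, and yours essentially coincides with the second one (in the Applications part of Section~3): there too one bounds $\tau'(\mathbb{R}^d,\mathcal{S}_d)\le d+1$ and invokes Theorem~\ref{main'} together with Proposition~\ref{noetherian}. The only difference is the choice of sphere-to-hyperplane transformation: the paper applies an inversion $\phi$ centered at the common point $x_{\tau'+1}$, so that the $\phi(S_j)$ become hyperplanes in $\mathbb{R}^d$ and one concludes by observing that $d+2$ points in $\mathbb{R}^d$ are affinely dependent; you instead use the paraboloid lift $x\mapsto(x,|x|^2)$ into $\mathbb{R}^{d+1}$ and a diagonal-matrix argument. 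Both are standard and equally clean.

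The paper's \emph{first} proof (the one placed directly after the statement in Section~2) is genuinely different and does not pass through Theorem~\ref{main'} at all: it is an ad-hoc dimension-descent. One picks $S_1\in\sF_1$, $S_2\in\sF_2$ and, using that the intersection of a $d$-sphere with a $(d-1)$-sphere is either empty, a point, a containment, or a $(d-2)$-sphere, iteratively finds $S_i\in\sF_i$ so that $S_1\cap\cdots\cap S_i$ is a sphere of dimension $d-i+1$; if the process ever stalls (tangency, or every member of $\sF_i$ contains the current intersection) then some $\sF_i$ already has a common point, and otherwise $S_1\cap\cdots\cap S_{d+1}$ is a single point contained in every member of $\sF_{d+2}$. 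This direct argument has the advantage of being self-contained and not requiring the Noetherian reduction; your approach has the advantage of illustrating that the result is a formal consequence of the general machinery.
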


\begin{proof}
We use the fact that given a $d$-dimensional sphere $A$ and a $(d-1)$-dimensional sphere $B$, they are either disjoint, tangent, $A$ contains $B$, or $A\cap B$ is a $(d-2)$-dimensional sphere.

Let $S_1$ and $S_2$ be spheres in $\sF_1$ and $\sF_2$. Assume they are not tangent, as otherwise every $S_3\in\sF_3$ must contain this tangent point, so we are done. Then $S_1$ and $S_2$ intersect in a $(d-1)$-dimensional sphere $C_{d-1}$. If every $S_3\in\sF_3$ contains $C_{d-1}$, $\sF_3$ has a nonempty intersection and we are done. If there exists $S_3\in\sF_3$ that is tangent to $C_{d-1}$, every $S_4\in\sF_4$ must contain this tangent point, so we are done. Hence we may assume there exists $S_3\in\sF_3$ that intersects $C_{d-1}$ in a $(d-2)$-dimensional sphere $C_{d-2}$. Continuing like this, we get spheres $S_i\in\sF_i$ for $1\leq i\leq d+1$ that intersect in a point ($0$-dimensional sphere) $C_0$. Then all $S_{d+2}\in\sF_{d+2}$ must contain $C_0$ so $\sF_{d+2}$ has a nonempty intersection.
\end{proof}

We would like to point out that Theorem \ref{spheres} is an example where Theorem \ref{main} by itself only yields a suboptimal result, as in this case $\eta = \tau = d+2$. On the other hand, we shall see in Section 3 that Theorem \ref{main'} does provide the right answer.

\begin{proposition}\label{spheres}

Let $X:=\mathbb{R}^d$ and $\mathcal{S}_d\subset\mathbb{R}^d$ be the family of all $d$-dimensional spheres. Then 
$$\tau(X,\,\mathcal{S}_d) = d+2.$$

\end{proposition}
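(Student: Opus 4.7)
The plan is to prove both $\tau(X, \mathcal{S}_d) \leq d+2$ and $\tau(X, \mathcal{S}_d) \geq d+2$, matching the Helly number from Theorem~\ref{spheres}.

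For the upper bound, I would use the classical polynomial linearization of spheres. Every sphere $S \subset \mathbb{R}^d$ with center $c$ and radius $r$ is the zero set of the quadratic $p_S(x) = \|x\|^2 - 2c \cdot x + \|c\|^2 - r^2$, which lies in the $(d+2)$-dimensional vector space
$$V := \mathrm{span}\bigl\{\|x\|^2,\, x_1,\, \ldots,\, x_d,\, 1\bigr\}.$$
Suppose for contradiction that $\tau \geq d+3$, so there exist points $x_1,\ldots,x_{d+3}$ and spheres $S_1,\ldots,S_{d+3}$ with $x_i \in S_j$ iff $i \neq j$. Since $\dim V = d+2$, the polynomials $p_{S_1},\ldots,p_{S_{d+3}}$ admit a nontrivial linear dependence $\sum_j c_j p_{S_j} \equiv 0$. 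Evaluating this identity at $x_i$ annihilates every term except the $i$-th, yielding $c_i\, p_{S_i}(x_i) = 0$; since $x_i \notin S_i$ we have $p_{S_i}(x_i) \neq 0$, and hence $c_i = 0$ for every $i$, contradicting nontriviality. This is exactly the Deza--Frankl trick reviewed in the introduction, applied to the finite-dimensional space of spherical polynomials.

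For the lower bound, I would construct an explicit comatching of size $d+2$ by choosing points $x_1,\ldots,x_{d+2} \in \mathbb{R}^d$ in sufficiently general position and taking $S_j$ to be the unique sphere passing through $\{x_i : i \neq j\}$. Provided each $(d+1)$-subset is affinely independent (so that its circumscribing sphere is well-defined and unique), and provided no single sphere contains all $d+2$ of the points at once, we automatically obtain $x_i \in S_j$ for $i \neq j$ and $x_j \notin S_j$, which is precisely a comatching of size $d+2$.

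The only obstacle is verifying the existence of such a configuration. Both conditions — affine general position of every $(d+1)$-subset, and non-cosphericity of the full $(d+2)$-tuple — define nonempty Zariski-open subsets of $(\mathbb{R}^d)^{d+2}$, so it suffices to exhibit one concrete example. The $d+1$ vertices of a regular simplex inscribed in the unit sphere, together with one additional point perturbed slightly off that sphere, satisfies both criteria, completing the argument.
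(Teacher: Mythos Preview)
Your argument is correct, and both halves differ from the paper's. For the upper bound, the paper proceeds by induction on $d$ (in fact on $d$-spheres sitting inside an ambient $\mathbb{R}^n$): given a hypothetical comatching of size $d+3$, intersecting $S_1,\ldots,S_{d+2}$ with $S_{d+3}$ produces a comatching of size $d+2$ among $(d-1)$-spheres, contradicting the inductive hypothesis. Your non-inductive linear-algebra argument---reading off linear independence of the sphere polynomials inside the $(d+2)$-dimensional space $\mathrm{span}\{\lVert x\rVert^2, x_1,\ldots,x_d,1\}$---is exactly the Deza--Frankl mechanism the paper invokes for hypersurfaces but not for spheres, and is shorter. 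For the lower bound, the paper gives a different explicit family: with $T$ a regular simplex, it takes the $d+1$ spheres centered at the vertices of $T$ and passing through the centroid, together with the circumsphere of $T$. Your circumscribed-sphere construction on $d+2$ points in general position is also valid; the only loose end is that ``perturbed slightly off that sphere'' guarantees non-cosphericity of the full tuple but does not by itself force affine independence of every $(d+1)$-subset containing the new point (the new point must also avoid the finitely many hyperplanes spanned by $d$-tuples of simplex vertices). Taking the extra point to be the centroid of the simplex, or simply noting that these bad hyperplanes have measure zero, closes this immediately.
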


\begin{proof}
First, we show that $\tau(X,\,\mathcal{S}_d)\leq d+2$ through a stronger statement: for positive integers $d\leq n$, $X:=\mathbb{R}^n$, and the family $\mathcal{S}_d\subset\mathbb{R}^d$ of all $d$-dimensional spheres, we have $\tau(X,\,\mathcal{S}_d)\leq d+2$. 

We proceed with induction on $d$. For $d=0$, a sphere is a point. Clearly $\tau(X,\,\mathcal{S}_0)=2$.


Assume the statement for $d-1$. Assume for the sake of contradiction there exists $d+3$ spheres $S_1,\,\ldots,\,S_{d+3}\in\mathcal{S}_d$ and $d+3$ (distinct) points $x_1,\,\ldots,\,x_{d+3}$ such that $x_i\in S_j$ if and only if $i\neq j$. Consider the set $Y=\{x_1,\,\ldots,\,x_{d+2}\}$ of points on $S_{d+3}$. Then $S_i$ contains $Y\setminus\{x_i\}$ for all $i$. Let $S_i':=S_i\cap S_{d+3}$ for $1\leq i\leq d+2$, which is a $(d-1)$-dimensional sphere. Then $x_i\in S_j'$ if and only if $i\neq j$, so this forms an induced matching in $\overline{G}_{X,\,\mathcal{S}_{d-1}}$. By the induction hypothesis, $d+2\leq (d-1)+2$, a contradiction.

To see that $\tau(X,\,\mathcal{S}_d) \geq d+2$ and thus that $\tau(X,\,\mathcal{S}_d) =d+2$, consider $T$ to be a regular $d$-dimensional simplex. Let $\sF$ be the family of spheres consisting of each sphere centered at the vertices of $T$ and containing the center of $T$, and the sphere circumscribing $T$. It is easy to see that $\tau(X,\,\sF)=d+2$, and thus $\tau(X,\,\mathcal{S}_d) \geq d+2$.

The construction for $d=2$ is shown below.



\begin{figure}[h]

\begin{tikzpicture}[x=1pt,y=1pt]

\draw (50,0) circle (50);
\draw ({50*cos(120)},{50*sin(120)}) circle (50);
\draw ({50*cos(240)},{50*sin(240)}) circle (50);
\draw (0,0) circle (50);

\fill[red] (0,0) circle (2pt);
\fill[red] ({50*cos(60)},{50*sin(60)}) circle (2pt);
\fill[red] ({50*cos(180)},{50*sin(180)}) circle (2pt);
\fill[red] ({50*cos(300)},{50*sin(300)}) circle (2pt);

\end{tikzpicture}
\caption{Four spheres in $\RR^2$ forming a comatching.}
\end{figure}
\end{proof}

We will rederive Theorem \ref{spheres} as a consequence of Theorem \ref{main'} in Section 3.

\subsection{Hypersurfaces} Another consequence of Theorem \ref{main'} is an optimal colorful Helly theorem for hypersurfaces of bounded degree over any field $\mathbb{F}$, generalizing the works of Moktzin \cite{Motzkin} and Deza-Frankl \cite{DezaFrankl}. We will discuss this in Section 3, but for now we would like to include a separate argument inspired by the one of Lov\'asz from \cite{Ba82}, which only works in the case when $\mathbb{F}=\mathbb{C}$ and which may be interesting for independent reasons. We would also like to highlight that in \cite{Helly}, De Loera et al also adapted this argument of Lov\'asz in order to prove an optimal colorful version of Doignon's theorem \cite{Doignon}. 

\begin{theorem}[Colorful Helly for complex polynomials] \label{polys}
Let $D$ and $d$ be positive integers and $m:=\binom{D+d}{d}$. If $\sF_1,\,\ldots,\,\sF_m\subset\mathbb{C}[x_1,\,\ldots,\,x_d]$ are finite sets of polynomials of degree at most $D$ such that
\[
\bigcap_{f\in\sF_i}Z(f)=\emptyset
\]
for all $i\in [m]$, then there exists $\{f_i\in\sF_i\}_{i\in [m]}$ such that  $\bigcap_i Z(f_i)=\emptyset$.
\end{theorem}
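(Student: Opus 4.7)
The plan is to assume for contradiction that every transversal $(f_1,\ldots,f_m)\in\sF_1\times\cdots\times\sF_m$ has $\bigcap_i Z(f_i)\neq\emptyset$, and derive a contradiction via a Steinitz-style exchange argument in the ambient polynomial vector space. Concretely, let $V := \C[x_1,\ldots,x_d]_{\leq D}$, so that $\dim_\C V = \binom{D+d}{d} = m$, and for each transversal $T = (f_1,\ldots,f_m)$ set $V(T) := \mathrm{span}_\C\{f_1,\ldots,f_m\}\subseteq V$. The invariant I will extremize is the dimension $\dim V(T)$.

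First I would record the following dimensional observation: for every transversal $T$ possessing a common zero $x\in\bigcap_i Z(f_i)$, the subspace $V(T)$ lies in $\ker(\mathrm{ev}_x)$, where $\mathrm{ev}_x\colon V\to\C$, $f\mapsto f(x)$, is a nonzero linear functional (since $\mathrm{ev}_x(1)=1$). Hence $\dim V(T)\leq m-1$ for every transversal. As each $\sF_i$ is finite there are only finitely many transversals, so one may choose $T^*=(f_1^*,\ldots,f_m^*)$ maximizing $\dim V(T)$; write $k\leq m-1$ for this maximum.

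The heart of the proof is the exchange step. Because $T^*$ consists of $m$ polynomials lying in the $k$-dimensional subspace $V(T^*)$ with $k<m$, the $f_i^*$ are linearly dependent, so some index $i_0$ satisfies $f_{i_0}^*\in\mathrm{span}(f_j^* : j\neq i_0)$, which forces $V(T^*)=\mathrm{span}(f_j^* : j\neq i_0)$. Fix any $x^*\in\bigcap_i Z(f_i^*)$; the hypothesis $\bigcap_{g\in\sF_{i_0}} Z(g)=\emptyset$ then yields some $g_{i_0}\in\sF_{i_0}$ with $g_{i_0}(x^*)\neq 0$. Let $T'$ be the transversal obtained from $T^*$ by swapping $f_{i_0}^*$ for $g_{i_0}$; then $V(T')=V(T^*)+\C g_{i_0}$. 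Since every element of $V(T^*)$ vanishes at $x^*$ while $g_{i_0}(x^*)\neq 0$, we have $g_{i_0}\notin V(T^*)$, so $\dim V(T')=k+1$, contradicting the maximality of $k$.

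The only real conceptual hurdle is isolating the right invariant, namely $\dim V(T)$. Once this is found, the proof is a one-shot Steinitz-style exchange that fundamentally exploits the numerical coincidence $\dim V = \binom{D+d}{d} = m$ between the dimension of the polynomial space and the number of color classes. I anticipate no further technical difficulty: the finiteness of the $\sF_i$ guarantees the maximum is attained, and the empty-intersection assumption on each $\sF_i$ is invoked exactly once, to produce the swap element $g_{i_0}$.
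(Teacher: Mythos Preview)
Your proof is correct, but it takes a genuinely different route from the paper's proof of this particular theorem. The paper (in Section~2.2) first proves a ``Colorful Carath\'eodory'' statement for polynomial ideals via an analytic closest-point argument in the inner-product space $\C[x_1,\ldots,x_d]$ (Lemma~\ref{lem:closer} and Proposition~\ref{caratheodory}), and then invokes Hilbert's Nullstellensatz twice to translate between ``empty common zero set'' and ``$1$ lies in the ideal''. Your argument sidesteps both ingredients: you extremize the linear span dimension $\dim V(T)$ over transversals and perform a single Steinitz-style swap, using only the evaluation functional $\mathrm{ev}_{x^*}$. This is strictly more elementary and, notably, works verbatim over any field $\F$, whereas the paper's Section~2.2 proof is tied to $\C$ through both the inner product and the Nullstellensatz. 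In spirit your exchange step is the polynomial specialization of the paper's main abstract result (Theorem~\ref{main'}, proved in Section~3), where one instead minimizes the set-theoretic intersection $\bigcap_i F_i$ over transversals; the paper does eventually recover the any-field statement that way in Section~3.4. What the paper's Section~2.2 route buys is the standalone Colorful Carath\'eodory proposition for ideals, which may be of independent interest; what your route buys is a self-contained, field-agnostic proof that avoids the nontrivial direction of the Nullstellensatz entirely.
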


We will also require the following simple lemma. 

\begin{lemma}
\label{lem:closer}
Let $V$ be a $\mathbb{C}$-vector space and $a,\,b,\,c\in V$ be distinct vectors. Let $[b,\,c]:=\{tb+(1-t)c: t\in\mathbb{C}\}$. If $\langle a-c,\,b-c\rangle\neq 0$, there exists $d\in[b,\,c]$ such that $||a-d||<||a-c||$.
\end{lemma}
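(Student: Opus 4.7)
This is the standard orthogonal‑projection computation in a complex inner product space, applied to the affine line through $b$ and $c$. Since the claim only asks to exhibit some point on $[b,c]$ closer to $a$ than $c$ is (rather than the actual nearest point), one only needs to show that moving infinitesimally from $c$ toward the projection of $a$ decreases the distance.

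Concretely, I would parameterize $d=c+t(b-c)$ with $t\in\mathbb{C}$ and expand
\[
\|a-d\|^2=\|a-c\|^2-2\,\operatorname{Re}\bigl(\,t\,\langle b-c,\,a-c\rangle\bigr)+|t|^2\,\|b-c\|^2.
\]
Set $s:=\langle b-c,\,a-c\rangle$; by the conjugate‑symmetry of the inner product, the hypothesis $\langle a-c,\,b-c\rangle\neq 0$ gives $s\neq 0$, and since $b\neq c$ we have $\|b-c\|^2>0$. Plugging in the projection value
\[
t_{\ast}:=\frac{\overline{s}}{\|b-c\|^2}
\]
yields $\operatorname{Re}(t_{\ast}s)=|s|^2/\|b-c\|^2$ and $|t_{\ast}|^2\|b-c\|^2=|s|^2/\|b-c\|^2$, so
\[
\|a-d\|^2=\|a-c\|^2-\frac{|s|^2}{\|b-c\|^2}<\|a-c\|^2,
\]
and the corresponding $d\in[b,c]$ works.

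\noindent\textbf{Obstacles.} There is essentially no obstacle — the only mild care needed is to be consistent with the convention for complex inner products (linearity in which slot) so that the cross term is really $2\operatorname{Re}(t\,\langle b-c,a-c\rangle)$, and to note that the lemma does not require $V$ to be complete: since we are only checking one explicit $t_{\ast}$, no Hilbert space structure is used beyond the Cauchy–Schwarz‑style computation above. If one preferred to avoid naming a specific minimizer, one could alternatively choose $t=\lambda\,\overline{s}/\|b-c\|^2$ and observe that $\|a-d\|^2-\|a-c\|^2=\frac{|s|^2}{\|b-c\|^2}(\lambda^2-2\lambda)$ is negative for any real $\lambda\in(0,2)$, which also demonstrates that in fact a whole segment of points on $[b,c]$ is strictly closer to $a$ than $c$ is.
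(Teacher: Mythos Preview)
Your proof is correct and follows the same overall approach as the paper: parametrize $d(t)=c+t(b-c)$, expand $\|a-d(t)\|^2$, and choose $t$ so that the cross term dominates the quadratic term. The only difference is in the choice of $t$: the paper performs a case split according to whether the complex number $\overline{(a-c)}\,(b-c)$ has nonzero real part or is purely imaginary, and in each case takes $t$ real (resp.\ imaginary) and sufficiently small; you instead plug in the exact projection parameter $t_\ast=\overline{s}/\|b-c\|^2$ and compute the drop $\|a-c\|^2-\|a-d\|^2=|s|^2/\|b-c\|^2>0$ directly. Your version is slightly cleaner and avoids the case analysis, while the paper's version makes more transparent that any sufficiently small $t$ in the right direction already works; substantively the arguments are the same.
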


\begin{proof}
Let $d(t):=tb+(1-t)c$. Then $a-d(t)=(a-c)-t(b-c)$ so
\begin{align*}
||a-d(t)||^2&=(a-d(t))(\overline{a}-\overline{d(t)})\\
&=((a-c)-t(b-c))((\overline{a-c})-\overline{t}(\overline{b-c}))\\
&=(a-c)(\overline{a-c})+t\overline{t}(b-c)(\overline{b-c})-t(\overline{a-c})(b-c)-\overline{t}(a-c)(\overline{b-c})\\
&=||a-c||^2+||t||^2||b-c||^2-2\mathrm{Re}(t(\overline{a-c})(b-c)).
\end{align*}
Hence $||a-d(t)||<||a-c||$ if and only if $||t||^2||b-c||^2<2\mathrm{Re}(t(\overline{a-c})(b-c))$. Note that $(\overline{a-c})(b-c)$ is nonzero because $\langle a-c,\,b-c\rangle\neq 0$.

If $(\overline{a-c})(b-c)$ has a real part $r\neq 0$, let $t$ be real. Then $\mathrm{Re}(t(\overline{a-c})(b-c))=rt$ so it suffices to prove that there exists $t$ such that $t^2||b-c||^2<2rt$. Taking $t$ close enough to $0$ works.

If $(\overline{a-c})(b-c)$ can be written as $ri$ for some real $r$, let $t=:si$. Then $\mathrm{Re}(t(\overline{a-c})(b-c))=-rs$ so it suffices to prove that there exists $s$ such that $s^2||b-c||^2<-2rs$. Taking $s$ close enough to $0$ works.
\end{proof}

The next ingredient is an analogue of the so-called colorful Carath\'eodory theorem \cite{Ba82} for polynomials of bounded degree.

\begin{proposition}[Colorful Carath\'eodory for polynomials] \label{caratheodory}
Let $D$ and $d$ be positive integers and $m:=\binom{D+d}{d}$. Let $\sF_1,\,\ldots,\,\sF_m\subset\mathbb{C}[x_1,\,\ldots,\,x_d]$ be finite sets of polynomials of degree at most $D$. For any
\[
p\in\bigcap_{i=1}^m I(\sF_i),
\]
there exists $\{f_i\in\sF_i\}_{i\in [m]}$ such that $p\in I(f_1,\,\ldots,\,f_m)$.
\end{proposition}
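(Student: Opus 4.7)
The plan is a geometric descent argument in a $\mathbb{C}$-vector space equipped with a Hermitian inner product, following Lov\'asz's proof of the colorful Carath\'eodory theorem and using Lemma~\ref{lem:closer} for the descent step. Endow $V := \mathbb{C}[x_1, \ldots, x_d]_{\leq D}$, of dimension exactly $m = \binom{D+d}{d}$, with a Hermitian inner product (the coefficient inner product in the monomial basis will do). For each rainbow transversal $T = (f_1, \ldots, f_m)$ with $f_i \in \sF_i$, let $I(T) \subseteq V$ denote the linear span and $q_T$ the orthogonal projection of $p$ onto $I(T)$; since each $\sF_i$ is finite, there are only finitely many rainbow transversals, so we may choose $T^\star$ minimizing $\|p - q_{T^\star}\|$. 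The aim is to show this minimum is zero, so that $p \in I(T^\star)$.

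Suppose for contradiction that $r := p - q_{T^\star} \neq 0$. Two observations drive the descent. First, for each index $j$ there exists $g_j \in \sF_j$ with $\langle g_j, r\rangle \neq 0$: otherwise the whole color class $\sF_j$ would be orthogonal to $r$, hence so would $I(\sF_j) \ni p$, forcing $\langle p, r\rangle = 0$; but $\langle q_{T^\star}, r\rangle = 0$ by the definition of projection, so $\|r\|^2 = \langle p, r\rangle - \langle q_{T^\star}, r\rangle = 0$, contradicting $r \neq 0$. Second, if $T^\star$ were $\mathbb{C}$-linearly independent then $I(T^\star)$ would have dimension $m = \dim V$ and therefore equal $V \ni p$, contradicting $r \neq 0$; hence some $f_k$ is redundant, i.e.\ $f_k \in \mathrm{span}\{f_i : i \neq k\}$. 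Replace $f_k$ by $g_k$ to obtain a new transversal $T'$: the redundancy of $f_k$ yields $I(T^\star) \subseteq I(T')$, and $\langle g_k, r\rangle \neq 0$ combined with $r \perp I(T^\star)$ forces $g_k \notin I(T^\star)$, giving strict containment $I(T^\star) \subsetneq I(T')$.

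To conclude, apply Lemma~\ref{lem:closer} with $a = p$, $c = q_{T^\star}$, and $b = q_{T^\star} + g_k$; note $b \in I(T')$ since $q_{T^\star} \in I(T^\star) \subseteq I(T')$ and $g_k \in I(T')$. The hypothesis $\langle a - c,\, b - c\rangle = \langle r, g_k\rangle \neq 0$ has been arranged, so the lemma produces a point $d$ on the complex line through $b$ and $c$ with $\|p - d\| < \|p - q_{T^\star}\|$, and $d \in I(T')$ since $I(T')$ is a $\mathbb{C}$-linear subspace containing both $b$ and $c$. Therefore $\|p - q_{T'}\| \leq \|p - d\| < \|p - q_{T^\star}\|$, contradicting the minimality of $T^\star$, so $p = q_{T^\star} \in I(T^\star)$. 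The most delicate point of the plan is the case split around linear dependence of the minimizing rainbow, which packages the two different reasons the minimum distance can be zero --- ``the span already fills the whole space'' versus ``a redundancy in the rainbow allows a strict enlargement via Lemma~\ref{lem:closer}.''
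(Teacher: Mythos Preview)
There is a genuine gap: you have reinterpreted $I(\cdot)$ as the $\mathbb{C}$-linear span inside the finite-dimensional space $V=\mathbb{C}[x_1,\ldots,x_d]_{\le D}$, whereas in the proposition (and in its application via the Nullstellensatz) $I(\sF_i)$ and $I(f_1,\ldots,f_m)$ denote the \emph{ideals} they generate in the full polynomial ring $R=\mathbb{C}[x_1,\ldots,x_d]$. Your argument correctly proves the linear-span statement, but that is neither what is asserted nor what is needed: in the proof of Theorem~\ref{polys} one only knows $1\in(\sF_i)$ as an ideal membership, not that $1$ lies in the linear span of $\sF_i$.

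The step that breaks under the ideal reading is your first observation: from ``every $g\in\sF_j$ is orthogonal to $r$'' you conclude ``$I(\sF_j)\perp r$''. That inference is fine when $I(\sF_j)$ is the linear span, but fails for the ideal --- multiplying an element of $\sF_j$ by a monomial can destroy orthogonality to $r$. The paper's proof repairs exactly this point: instead of assuming every $f\in\sF_m$ is orthogonal to $p-h$, it assumes every $g\in(f)$ for every $f\in\sF_m$ is orthogonal to $p-h$, which then legitimately forces $(\sF_m)\subset(p-h)^\perp$ and yields the contradiction with $p\in(\sF_m)$. Correspondingly, the Lemma~\ref{lem:closer} step in the paper uses a $g$ in the principal ideal $(f)$, not $f$ itself. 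If you want to salvage your descent, you must work in $R$ rather than $V$, allow the ``projection'' to land in the (infinite-dimensional) ideal, and quantify over all $g\in(f)$ when locating a direction with $\langle g,\,r\rangle\neq 0$; this is precisely the route the paper takes.
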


\begin{proof}
Let $R:=\mathbb{C}[x_1,\,\ldots,\,x_d]$. The ring $R$ is a $\mathbb{C}$-vector space so we have an inner product and norm. Define the distance between polynomials $P$ and $Q$ to be $||P-Q||$. This is a metric. Let the distance between a polynomial $P$ and a subset $S\subset R$ equal $\min_{Q\in S}||P-Q||$.

Suppose $f_i\in\sF_i$ for $1\leq i\leq m$ has the property that the ideal $\mathfrak{a}:=I(f_1,\,\ldots,\,f_m)$ is closest to $p$, with $h\in\mathfrak{a}$ being the closest point. Assume for the sake of contradiction that $||p-h||>0$. Note that polynomials in $R$ of degree at most $D$ have at most $m$ terms. Thus the $\mathbb{C}$-vector space $V$ of polynomials of degree at most $D$ has dimension $m$.

If $f_1,\,\ldots,\,f_m$ forms a basis of $V$, there exists $c_1,\,\ldots,\,c_m\in\mathbb{C}$ not all $0$ such that $c_1f_1+\cdots+c_mf_m=1$. But then $1\in\mathfrak{a}$ so $p\in R=\mathfrak{a}$, a contradiction.

Hence there must exist $c_1,\,\ldots,\,c_m\in\mathbb{C}$ not all $0$ for which $c_1f_1+\cdots+c_mf_m=0$. Without loss of generality, suppose $c_m\neq 0$; so $\mathfrak{a}=I(f_1,\,\ldots,\,f_{m-1})$.

Suppose that for all $f\in\sF_m$ and $g\in (f)$, it is true that $\langle g-h,\,p-h\rangle=0$. Since $0\in (f)$ it follows that $\langle g,\,p-h\rangle=\langle g-h,\,p-h\rangle-\langle 0-h,\,p-h\rangle=0$. Thus $(\sF_m)$ is contained in $(p-h)^\perp$. However, $\langle p,\,p-h\rangle=\langle p-h,\,p-h\rangle-\langle 0-h,\,p-h\rangle=||p-h||^2\neq 0$ gives $p\not\in (p-h)^\perp$ so $p\not\in (\sF_m)$, a contradiction.

Hence there exists $f\in\sF_m$ and $g\in (f)$ such that $\langle g-h,\,p-h\rangle\neq0$. By Lemma~\ref{lem:closer}, there exists $h_1\in [g,\,h]\subset I(f,\,h)\subset I(f_1,\,\ldots,\,f_{m-1},f)$ closer to $p$ than $h$. Thus $I(f_1,\,\ldots,\,f_{m-1},f)$ is closer to $p$ than $\mathfrak{a}$, a contradiction.
\end{proof}

\begin{proof}[Proof of Theorem~\ref{polys}]
By Hilbert's Nullstellensatz, $1\in(\sF_i)$ for all $i\in [m]$. By Proposition \ref{caratheodory}, there exists a transversal $\{f_i\in\sF_i\}_{i\in [m]}$ such that $1\in (f_1,\,\ldots,\,f_m)$. Thus $\bigcap_i Z(f_i)=\emptyset$ by Hilbert's Nullstellensatz again.
\end{proof}

We will rederive Theorem \ref{polys} as a consequence of Theorem \ref{main'} in Section 3. In doing so, we will circumvent the need for the nontrivial direction in the Hilbert Nullstellensatz and thus get an optimal colorful Helly result for polynomials of bounded degree over any ground field $\mathbb{F}$.

\bigskip

\section{Proof of Theorem \ref{main}, Proposition \ref{noetherian} and Theorem \ref{main'}}

We begin with the proof of Theorem \ref{main'}, which subsumes Theorem \ref{main}. 

\subsection{Proof of Theorem \ref{main'}}
The setup: let $\tau = \tau'(X, \sF)$, and $\sF_1,\ldots, \sF_{\tau + 1}$ be finite subfamilies of $\sF$ with an empty intersection. We make the following key observation.
    \smallskip
    
    \textit{Claim}. For each $(\tau + 1)$-tuple $C = (F_1,\ldots, F_{\tau + 1}) \in \sF_1 \times \cdots \times \sF_{\tau + 1}$ with non-empty intersection, there exists some $C' = (F'_1,\ldots, F'_{\tau + 1}) \in \sF_1 \times \cdots \times \sF_{\tau + 1}$ such that
    $$F'_1 \cap \cdots \cap F'_{\tau + 1} \subsetneq F_1 \cap \cdots \cap F_{\tau + 1}.$$
    \smallskip 
    \textit{Proof of Claim}. We first note that there must exist an $i \in [\tau + 1]$ such that $$F_i \supset \bigcap_{j \neq i} F_j.$$
    If not, we can find $x_i \in X$ that lies in $\bigcap_{j \neq i} F_j$ but not $F_i$. Setting $x_{\tau + 2}$ to be any element in the intersection $\bigcap_{j} F_j$, we obtain a comatching with intersection of size $\tau + 1$, contradiction.

    Consider any $x \in \bigcap_{j \neq i} F_j$. As the sets in $\sF_i$ have empty intersection, there exists some $F_i' \in \sF_i$ that does not contain $x$. Letting $F_j' = F_j$ for each $j \neq i$, we have
    $$F'_1 \cap \cdots \cap F'_{\tau + 1} \subsetneq \bigcap_{j \neq i} F'_j = \bigcap_{j \neq i} F_j = \bigcap_{j} F_j$$
    which proves the claim.
    
    Let the $(\tau + 1)$-tuple $C = (F_1,\ldots, F_{\tau + 1}) \in \sF_1 \times \cdots \times \sF_{\tau + 1}$ achieve the minimal non-empty intersection $F_1\cap\cdots\cap F_{\tau+1}$ with respect to the inclusion ordering; such $F_i$'s exist because the $\sF_i$'s are finite. Applying the claim, we obtain
    $C' = (F'_1,\ldots, F'_{\tau + 1}) \in \sF_1 \times \cdots \times \sF_{\tau + 1}$ such that
    $$F'_1 \cap \cdots \cap F'_{\tau + 1} \subsetneq F_1 \cap \cdots \cap F_{\tau + 1}.$$
    By minimality, we must have $F'_1 \cap \cdots \cap F'_{\tau + 1} = \emptyset$, as desired. This proves that
    $$\eta(X, \sF) \leq 1 + \tau'(X, \sF).$$

\smallskip

In particular, if $\tau'(X, \sF) = \tau(X, \sF) - 1$, then $\eta(X, \sF) \leq \tau(X, \sF)$. On the other hand, the reverse inequality $\eta(X, \sF) \geq \tau(X,\sF)$ also holds. Write instead $\tau = \tau(X, \sF)$. Let $F_1,\ldots, F_\tau$ be a maximum comatching in $\sF$. Since $\tau'(X, \sF) = \tau - 1$, we have $F_1 \cap \cdots \cap F_\tau = \emptyset$; taking $\sF_1 = \cdots = \sF_{\tau - 1} = \{F_1,\ldots, F_\tau\}$ shows that $\sF$ has colorful Helly number at least $\tau$. All in all, we showed that
\begin{equation} \label{colorfulHelly}
     \text{if}\ \tau'(X, \sF) = \tau(X, \sF) - 1,\ \text{then}\ \eta(X, \sF)  = \tau(X, \sF).
\end{equation}

\smallskip

\subsection{Examples showing sharpness} We now show that Theorem~\ref{main} is tight by itself. For example, let $M = 2$, $X = \{1,\,2,\,3,\,4\}$, and
$$\sF_{1} = \{\{1,\,2\},\, \{3,\, 4\}\},$$
$$\sF_{2} = \{\{2,\,3\},\, \{4,\, 1\}\},$$
$$\sF = \sF_1 \cup \sF_2.$$
Then there is no comatching in $\sF$ of size $3$: any three element subset of $X$ must be of the form $\{i,\, i + 1,\, i + 2\}$ modulo $4$, and there is no subset of $\sF$ that contains $\{i, i + 2\}$. So $\tau(X, \sF) = 2$, but $\eta(X, \sF) \geq 3$ because one can consider the aforementioned two families $\sF_1$ and $\sF_2$ (which clearly do not satisfy the colorful Helly property).


A more general example goes as follows. Take any $M \geq 2$. Let $X = \{1,\,\ldots,\, 2M\}$, and 
$$\sF_{1} = \cdots = \sF_{M - 1} = \{X \backslash \{1,\,2\},\, X \backslash \{3,\, 4\},\,\ldots,\, X \backslash \{2M - 1,\, 2M\}\},$$
$$\sF_{M} = \{X \backslash \{2,\,3\},\, X \backslash \{4,\,5\},\,\ldots,\, X \backslash \{2M,\, 1\}\},$$
$$\sF = \bigcup_{i = 1}^M \sF_i.$$
Then there is no comatching of size $(M + 1)$ in $\sF$: any $(M + 1)$ element subset $M$ of $X$ must contain three elements of the form $\{i,\, i + 1,\, i + 2\}$ modulo $M$, and there is no set in $\sF$ that contains $i$ and $i + 2$ but not $i + 1$. So $\tau(X, \sF) \leq M$. On the other hand, we have $\eta(X, \sF) \geq M + 1$ because one can consider the aforementioned families $\sF_1, \cdots, \sF_M$, which do not satisfy the colorful Helly property.
\subsection{Proof of Proposition \ref{noetherian}} Let $\eta = \eta(X, \sF)$. It is clear that $\eta^\infty(X, \sF) \geq \eta$. On the other hand, consider (possibly infinite) subfamilies $\sF_1,\,\ldots,\, \sF_{\eta} \subset \sF$ such that $\cap_{F \in \sF_{i}} F = \emptyset$ for every $i=1,\,\ldots,\,\eta$. By the Noetherian hypothesis, there exists finite $\sF_i' \subset \sF_i$ such that $\cap_{F \in \sF_{i}'} F = \emptyset$. By the definition of $\eta$, there exists $F_i \in \sF_i'$ such that $F_1 \cap \cdots \cap F_{\eta} = \emptyset$. We conclude that $\eta^\infty(X, \sF) \leq \eta.$

\subsection{Applications}

Finally, we would like to record three special cases of Theorem \ref{main'}, in particular the criterion for colorful Helly from \eqref{colorfulHelly}. The first one is the fact that the colorful Helly numbers of spheres in $\mathbb{R}^{d}$ equals the Helly number $d+2$, already proved in Theorem \ref{spheres}. 

\bigskip

\noindent\textbf{Spheres}. Let $X = \R^d$, and let $\mathcal{S}_d$ be the family of all $d$-dimensional spheres. Then we have
$$\tau(X, \mathcal{S}_d) = d + 2, \tau'(X, \mathcal{S}_d) = d + 1.$$
We have shown the first statement above, which implies that $\tau'(X, \mathcal{S}_d) \in \{d + 1, d + 2\}$. To show that $\tau'(X, \mathcal{S}_d) = d + 1$, suppose for the sake of contradiction that points $x_1,\ldots, x_{d + 3} \in \R^d$ and spheres $S_1,\ldots, S_{d + 2} \in \mathcal{S}_d$ form a comatching with intersection. We apply inversion $\phi$ with center $O = x_{d + 3}$. Then $\phi(S_1),\,\ldots,\, \phi(S_{d + 2})$ are a set of $(d + 2)$-planes in $\R^d$ that form a comatching with $\phi(x_1),\ldots, \phi(x_{d + 2})$. This is impossible because $d+2$ points in $\mathbb{R}^{d}$ are always affinely dependent. So we recover the fact that
$$\eta(X, \mathcal{S}_d) = \eta^\infty(X, \mathcal{S}_d) = d + 2.$$

\bigskip

\noindent\textbf{Hypersurfaces of bounded degree}. Let $\F$ be an arbitrary field, let $X := \F^d$, and let $V_D := F[x_1,\ldots, x_d]_{\leq D}$ be the vector space of polynomials in $x_1,\ldots,x_{d}$ and of degree at most $D$. Recall that $\tau(X,\, V_D) \leq \binom{D + d}{d}$, since the elements of any comatching must correspond to linearly independent polynomials via \eqref{matching}.  Suppose for the sake of contradiction that $\tau'(X,\,V_D) \geq \binom{D + d}{d}$. Set $m = \binom{D + d}{d}$. Then there exists polynomials $f_1,\,\ldots,\,f_{m}\in F[x_1,\ldots, x_d]_{\leq D}$ and $a_0,\,a_1,\,\ldots,\,a_m\in F$ such that $f_i(a_j)=0$ if and only if $i\neq j$. Since the polynomials are linearly independent, they form a basis. Hence there exists $c_1,\,\ldots,\,c_m\in F$ such that $c_1f_1+\cdots+c_mf_m=1$. It follows that $0=c_1f_1(a_0)+\cdots+c_mf_m(a_0)=1$, a contradiction. Thus $\tau'(X,\,V_D)\leq\binom{D + d}{d}-1$. By \eqref{colorfulHelly}, it follows that 
$$\eta(X, V_D) \leq \binom{D + d}{d}.$$
In other words, this shows that Theorem \ref{polys} holds over any ground field $\mathbb{F}$, not only over $\mathbb{C}$. 

\bigskip

\noindent\textbf{Hamming balls of a fixed radius}. The Hamming distance between $p,\,q\in\mathbb{R}^n$, denoted $\mathrm{dist}(p,\,q)$, is the number of coordinates where $p$ and $q$ differ. The Hamming ball centered at $x$ with radius $t$, denoted $B(x,\,t)$, is the set of points $y\in\mathbb{R}^n$ with $\mathrm{dist}(x,\,y)\leq t$. In \cite{AJS}, Alon, Jin and Sudakov prove that the Helly number for the family $\sF$ of Hamming balls of radius $t$ in $\mathbb{R}^{n}$ is $2^{t+1}$ by establishing $\tau(X, \sF) = 2^{t + 1}$. We generalize this result by showing that the colorful Helly number of $\sF$ is also $2^{t+1}$. 

Recall that $\tau'=\tau'(X, \sF)$ is the largest $\tau'$ such that there exist $x_1,\ldots, x_{\tau' + 1} \in X$ and $F_1,\ldots, F_{\tau'}$ such that $x_i \in F_j$ if and only if $i \neq j$. We prove that $\tau'$ for Hamming balls of radius $t$ is $2^{t+1}-1$. Let $m:=2^{t+1}$. Suppose for the sake of contradiction there exists $a_1,\,\ldots,\,a_m,\,b_0,\,b_1,\,\ldots,\,b_m\in\mathbb{R}^n$ such that $\mathrm{dist}(a_i,\,b_j)\leq t$ if and only if $i\neq j$. For each $i\in [m]$, let $D_i:=\{k\in [n]: a_{i,k}\neq b_{i,k}\}$ and $d_i$ be the largest element of $D_i$. For all $i\in [m]$, let $s_i:=\mathrm{dist}(a_i,\,b_i)-t$. We call a pair $(I_1,\,I_2)$ compatible with $i$ if $I_1\subset D_i$, $|I_1|\geq t+\frac{s_1+1}{2}$, $I_2\subset [n]\setminus D_i$, or $I_1\subset D_i\setminus\{d_i\}$, $|I_1|=t+\frac{s_i}{2}$, $I_2\subset [n]\setminus D_i$. Note that $|I_1|\geq t+1$ in both cases. For every $i\in [m]$ and every such pair $(I_1,\,I_2)$, define a polynomial on $x\in\mathbb{R}^n$ by
\[
f_{i,I_1,I_2}(x):=\prod_{k\in I_1\cup I_2}(x_k-a_{i,k})\prod_{k\in D_i\setminus I_1}(x_k-b_{i,k}).
\]
In \cite[Theorem 2.3]{AJS}, Alon, Jin and Sudakov prove that these polynomials are linearly independent in the vector space of multilinear polynomials on $n$ variables $V_n = \RR[x_1, \cdots, x_n] / (x_1^2 - x_1, \cdots, x_n^2 - x_n)$. The vector space $V_n$ has dimension $2^n$. Let
\[
V_{n,d}:=\begin{cases}
\sum_{i=0}^{(d-1)/2}\binom{n}{i}, & \text{if $d$ is odd}\\
\sum_{i=0}^{d/2-1}\binom{n}{i}+\binom{n-1}{d/2-1}, & \text{if $d$ is even}
\end{cases}
\]
The number of pairs compatible with $i$ is $2^{n-(t+s_i)}V_{t+s_i,s_i}$ so
\[
\sum_{i=1}^m2^{n-(t+s_i)}V_{t+s_i,s_i}\leq 2^n.
\]
By \cite[Claim 2.2]{AJS}, $V_{t+s_i,s_i}\geq 2^{s_i-1}$ so
\[
2^n\geq\sum_{i=1}^m2^{n-(t+s_i)}V_{t+s_i,s_i}\geq\sum_{i=1}^m 2^{n-t-1}=2^n.
\]
Hence the polynomials $f_{i,I_1,I_2}$ form a basis of $V_n$, and there exists coefficients $c_{i, I_1, I_2} \in\mathbb{R}$ such that

\[
\sum_{i, I_1, I_2} c_{i, I_1, I_2}f_{i, I_1, I_2}(x) =1, \text{for all}\ x \in \{0, 1\}^{[n]}.
\]

For all $(i,\,I_1,\,I_2)$, as $b_0$ has Hamming distance at most $t$ with $a_i$, and $\abs{I_1 \cup I_2} \geq t + 1$, there must exists $k \in I_1 \cup I_2$ such that $b_{0, k} = a_{i, k}$. Therefore, the first product in the definition of $f_{i,I_1,I_2}$ vanishes at $b_0$, thus $f_{i,I_1,I_2}(b_0)=0$. Therefore, we have

\[
0=\sum_{i, I_1, I_2} c_{i, I_1, I_2}f_{i, I_1, I_2}(b_0) =1,
\]
a contradiction. Thus $\tau'=2^{t+1}-1$ and the conclusion follows by the criterion from \eqref{colorfulHelly}.

\bigskip

\section{Collapsibility}

Motivated by our results on the colorful Helly theorem, we further explore the connection between the comatching number and the collapsible and Leray properties of the nerve complex. We first recall some relevant definitions (See e.g. \cite{Khm}). All simplicial complexes in this section are abstract simplicial complexes. A \emph{$d$-collapse} of a simplicial complex removes a free face $\sigma$ with $\abs{\sigma} = d$ and all faces containing $\sigma$. We say a simplicial complex $K$ is \emph{$d$-collapsible} if we can remove all simplices of size at least $d$ via a sequence of $d$-collapses. We say $K$ is \emph{$d$-Leray} if {\color{black} for any induced subcomplex $L$ of $K$, the reduced homology groups $\Tilde{H}_i(L)$ with coefficients in $\RR$ are trivial for all $i \geq d$.}

The nerve complex $N(\sF)$ of a set family $\sF$ is the simplicial complex on vertex set $\sF$ whose faces are the subfamilies of $\sF$ with non-empty intersection. We say a complex $K$ is \emph{$d$-representable} if it is isomorphic to the nerve complex of convex sets in $\RR^d$. The connections between $d$-representable, $d$-collapsible, $d$-Leray and the Helly theorems are well-studied. Wegner \cite{Wegner} showed that $d$-representable implies $d$-collapsible, and $d$-collapsible implies $d$-Leray. In the opposite direction, Tancer \cite{Tancer} showed that for any $d$ there exists a simplicial complex that is $2$-collapsible but not $d$-representable. Alon and Kalai \cite{AK} showed that if $N(\sF)$ is $d$-collapsible, then $\sF$ has fractional Helly number $(d + 1)$ with optimal dependence between $\alpha$ and $\beta$. Furthermore, Kalai and Meshulam \cite{KalaiMeshulam} also showed that a $d$-collapsible simplicial complex property has the colorful Helly property, thereby generalizing the colorful Helly theorem for convex set (or more precisely the stronger fact that the $d$-Leray property implies that the colorful Helly number at most $d + 1$). For the sake of completeness, we include a version of their proof in the abstract setting for set systems. 
\begin{proposition} \label{Khmelnitsky}
If $N(\sF)$ is $d$-collapsible, then $\sF$ has colorful Helly number at most $(d + 1)$.
\end{proposition}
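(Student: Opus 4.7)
The plan is to argue by contradiction, tracking which rainbow transversals remain as faces through a $d$-collapse sequence of $K := N(\sF)$. I would suppose that finite subfamilies $\sF_1, \ldots, \sF_{d+1} \subseteq \sF$ satisfy $\bigcap_{F \in \sF_i} F = \emptyset$ for each $i$, yet every transversal $(F_1, \ldots, F_{d+1}) \in \sF_1 \times \cdots \times \sF_{d+1}$ has non-empty intersection. Equivalently, each color class $\sF_i$, viewed as a subset of the vertex set of $K$, fails to be a face of $K$, while every rainbow $(d+1)$-subset is a face of $K$. Note that each $\sF_i$ is in particular non-empty, since an empty intersection of sets is, by convention, the ambient space.

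Next I would fix a $d$-collapse sequence $K = K_0 \supset K_1 \supset \cdots \supset K_N$ in which $K_N$ contains no face of size $\geq d$. At step $0$ every rainbow transversal is a face of $K$, whereas at step $N$ no rainbow transversal (which has size $d+1$) can be a face. Hence there is a smallest index $k$ at which some rainbow transversal $T$ gets removed. At this step a free face $\sigma_k$ of $K_{k-1}$ with $|\sigma_k| = d$ is collapsed together with its unique maximal coface $\tau_k$, and $\sigma_k \subseteq T \subseteq \tau_k$.

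The key observation is that $\sigma_k$ must itself be rainbow, since $T \supseteq \sigma_k$ is rainbow and every subset of a rainbow set is rainbow. Being rainbow of size $d$, the face $\sigma_k$ uses exactly $d$ of the $d+1$ available colors; let $j$ be the missing one. For every $v \in \sF_j$, the set $\sigma_k \cup \{v\}$ is itself a rainbow transversal, which by the minimality of $k$ still belongs to $K_{k-1}$. The freeness of $\sigma_k$ in $K_{k-1}$ then forces every face containing $\sigma_k$ to lie inside $\tau_k$, so $v \in \tau_k$. Since $v \in \sF_j$ was arbitrary, $\sF_j \subseteq \tau_k$, which makes $\sF_j$ itself a face of $K_{k-1} \subseteq K$, contradicting $\bigcap_{F \in \sF_j} F = \emptyset$.

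The main obstacle is just the careful setup of the ``first problematic collapse''; once it is identified, showing that the free face at that step must be rainbow, and that its unique maximal coface is forced to absorb the entire missing color class $\sF_j$, is short bookkeeping. The argument uses the convention stated in the excerpt that a $d$-collapse removes a free face $\sigma_k$ (contained in a unique maximal coface $\tau_k$) together with all cofaces of $\sigma_k$, so that every face of $K_{k-1}$ containing $\sigma_k$ automatically sits inside $\tau_k$, justifying the final step.
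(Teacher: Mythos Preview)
Your argument is correct and follows essentially the same route as the paper's: both track the first rainbow transversal removed in a fixed $d$-collapse sequence, observe that the free face $\sigma$ being collapsed is rainbow of size $d$ (missing one color $j$), and then use freeness to conclude $\sF_j$ lies inside the unique maximal coface $\tau$, hence $\sF_j$ is a face of $N(\sF)$. The only cosmetic difference is that you phrase it as a proof by contradiction and spell out the step identifying the missing color, whereas the paper states the contrapositive and relabels so that the missing color is $d+1$.
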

\begin{proof}
    Suppose $\sF_1,\ldots, \sF_{d + 1}$ are subfamilies of $\sF$ such that for every $F_1 \in \sF_1,\ldots, F_{d + 1} \in \sF_{d + 1}$, we have $F_1 \cap \cdots \cap F_{d + 1} \neq \emptyset$. Then $(F_1,\ldots, F_{d + 1})$ is a face in $N(\sF)$. Without loss of generality, assume $(F_1,\ldots, F_{d + 1})$ is the first face in $\sF_1 \times \cdots \times\sF_{d + 1}$ to be removed during the $d$-collapsing process by collapsing $\sigma = (F_1,\ldots, F_d)$. Let $\tau$ be the unique maximal face containing $\sigma$ right before this $d$-collapse. For any $F \in \sF_{d + 1}$, $\sigma \cup \{F\}$ is a face right before the collapse, so we must have $\sigma \cup \{F\} \in \tau$. We conclude that $\sF_{d + 1} \subset \tau$, thus $\sF_{d + 1}$ is a face in $N(\sF)$ so
    \[
    \bigcap_{F\in\sF_{d+1}}F\neq\emptyset.
    \]
\end{proof}
A similar argument can also be found in \cite{Khm}. In light of our Theorem \ref{main'}, it is natural to wonder if $\tau(X, \sF) \leq d$ implies $d$-collapsibility. If true, Proposition \ref{Khmelnitsky} would thus imply our Theorem \ref{main'} and would also yield Corollary \ref{frac} with optimal dependence between $\alpha$ and $\beta$ (via the work of Alon and Kalai \cite{AK}). Unfortunately, the following proposition shows that this is not the case.
\begin{proposition}
\label{prop:no-leray}
For every $d \geq 1$, there exist a base set $X$ and a set family $\sF$ on $X$ with comatching number $\tau'(X, \sF) \leq \tau(X, \sF) \leq 2d$, such that $N(\sF)$ is not $(3d - 1)$-Leray and thus not $(3d - 1)$-collapsible.
\end{proposition}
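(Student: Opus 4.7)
The plan is to construct an explicit witness $(X, \sF)$. My approach combines a product construction with a carefully chosen base case. First, I would formalize the product of two set systems: given $(X_i, \sF_i)$ for $i = 1, 2$, define $(X_1 \times X_2, \sF)$ where $\sF = \{F \times X_2 : F \in \sF_1\} \cup \{X_1 \times F : F \in \sF_2\}$. A routine calculation shows that $N(\sF)$ equals the simplicial join $N(\sF_1) * N(\sF_2)$ (using that intersections of product sets factor coordinate-wise), while the comatching satisfies a bound of the form $\tau(X_1 \times X_2, \sF) \leq \max\{\tau_1, \tau_2, \tau_1' + \tau_2'\}$, derived by splitting any comatching into its ``first-factor'' and ``second-factor'' components and invoking the definitions of $\tau, \tau'$ in each piece.

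Next, I would build a base case $(X_0, \sF_0)$ with the right balance of small comatching and topologically rich nerve. For the case $d = 2$, attributed to Qiuyu Ren in the acknowledgments, this requires exhibiting a set system with $\tau \leq 4$ whose nerve contains an induced subcomplex with non-trivial homology in some dimension at least $5$. A natural candidate would realize the boundary of a specific polytope (e.g., a cross-polytope, a suitable neighborly polytope, or a specific triangulated $5$-sphere) as a nerve, with the point set chosen so that the intersection structure suppresses comatchings of size $5$. Once this is in hand, the general $d$ case follows by iterating the product construction on such base systems (or by combining multiple base cases), using $S^a * S^b \simeq S^{a+b+1}$ to track the topology of the resulting nerve and the comatching bound above to control the combinatorics.

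The main obstacle is the base case. Bounded comatching imposes strong combinatorial restrictions on the nerve: for instance, a short argument shows that if $\tau(X, \sF) \leq 2$ and four sets $F_1, F_2, F_3, F_4$ have all four triple intersections nonempty, then the quadruple intersection is nonempty as well (apply $\tau \leq 2$ to $\{F_1, F_2, F_3\}$ to get, say, $F_1 \cap F_2 \subseteq F_3$, whence $F_1 \cap F_2 \cap F_3 \cap F_4 = F_1 \cap F_2 \cap F_4 \neq \emptyset$). Consequently, no four vertices of the nerve can form an induced $\partial \Delta^3$, and similar arguments rule out larger induced simplicial spheres of the naive sort. Navigating this tension between combinatorial constraints and topological complexity---finding a set system whose nerve acquires non-trivial $(3d-1)$-dimensional homology despite these restrictions---is the heart of the proof, and the explicit construction must be tailored to sidestep such obstructions.
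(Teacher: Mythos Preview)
Your high-level plan --- build a base case and propagate by joins, tracking comatching additively and homology via K\"unneth --- matches the paper exactly (the paper carries this out at the level of abstract simplicial complexes, proving $\tau(K*L)\le\tau(K)+\tau(L)$ and only at the end converting back to set families, but your set-system product with the bound $\tau\le\max\{\tau_1,\tau_2,\tau_1'+\tau_2'\}$ would serve equally well). The genuine gap is the base construction, and it is more than a missing detail. The base level is $d=1$, not $d=2$: one needs $\tau\le 2$ with $\tilde{H}_2\neq 0$, and starting from $d=2$ would leave $d=1$ unaddressed in any case. Your own observation that $\tau\le 2$ forbids an induced $\partial\Delta^3$ already disqualifies the candidates you list; more quantitatively, a simplicial $(n-1)$-sphere such as a cross-polytope boundary has $\tau=n$ with top homology in degree $n-1$, and after a $k$-fold join this yields $\tau\le nk$ against homology in degree $nk-1$, which never attains the required $2d$ versus $3d-1$ ratio.

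The paper's base is of a qualitatively different kind: cover the torus $(\mathbb{R}/\mathbb{Z})^2$, cut as a $4\times 4$ grid, by the sixteen open $2\times 2$ subsquares. All nonempty intersections are contractible, so by Leray's nerve theorem $N(\sF)$ is homotopy equivalent to the torus and hence $\tilde{H}_2(N(\sF))\neq 0$; a short case check on the grid shows that among any three pairwise-intersecting squares one always contains the intersection of the other two, so $\tau=2$. The point is that a closed surface buys one extra degree of homology over a sphere at the same comatching number --- $\tilde{H}_2$ at $\tau=2$ rather than $\tilde{H}_1$ --- and it is exactly this surplus that, after a $d$-fold join, places the nontrivial homology in degree $3d-1$ while keeping $\tau\le 2d$.
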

We now prove this proposition. First, we define an analogue of comatching number for abstract simplicial complexes.
\begin{definition}
    Let $K$ be an abstract simplicial complex on vertex set $V$. A \textit{comatching} in $K$ is a subset $M \subset V$ such that the following holds: for any $v \in M$, there is a maximal face $F$ of $K$ with $F \cap M = M \backslash \{v\}$. Let $\tau(V, K)$ denote the maximum size of a comatching in $K$.
\end{definition}
We can easily relate the definition of comatching for set families and simplicial complexes.
\begin{proposition}
\label{prop:comatching-conversion}
    1) For any set family $\sF$ on ground set $X$, we have $\tau(\sF, N(\sF)) \leq \tau(X, \sF)$.

    2) For any simplicial complex $K$ on vertex set $V$ without an isolated vertex, there is a ground set $X$ and a set family $\sF(K)$ on $X$ such that $K \cong N(\sF(K))$ and $\tau(X, \sF(K)) \leq \max(2, \tau(V, K))$.
\end{proposition}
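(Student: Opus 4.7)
The plan is to prove both parts by directly translating between comatchings in set families and in abstract simplicial complexes.

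For part 1, suppose $M = \{F_1, \ldots, F_k\}$ is a comatching in $N(\sF)$. By definition, for each $i$ there is a maximal face $G_i$ of $N(\sF)$ with $G_i \cap M = M \setminus \{F_i\}$; that is, $G_i \supseteq M \setminus \{F_i\}$ and $F_i \notin G_i$. Since $G_i$ is a face of the nerve, $\bigcap_{F \in G_i} F$ is nonempty, so I would pick any $x_i$ in this intersection. Then $x_i \in F_j$ for each $j \neq i$ (because $F_j \in G_i$), while maximality of $G_i$ forces $x_i \notin F_i$: otherwise $G_i \cup \{F_i\}$ would be a face strictly larger than $G_i$. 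The induced-matching conditions also make the $x_i$ automatically distinct, producing a comatching in $(X, \sF)$ of size $k$.

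For part 2, the plan is to realize $K$ as the nerve of an augmented system. Let $\mathcal{M}$ denote the set of maximal faces of $K$, take $X := \mathcal{M} \sqcup V$ (with one point $x_F$ for each maximal face $F$ and a distinguishing ``tag'' point $y_v$ for each vertex $v$), and set $\sF(v) := \{x_F : v \in F\} \cup \{y_v\}$ with $\sF(K) := \{\sF(v) : v \in V\}$. The tags $y_v$ guarantee that the sets $\sF(v)$ are pairwise distinct (so $V$ embeds faithfully as the vertex set of the nerve) and each tag lies in only one $\sF(v)$, hence cannot appear in the intersection of two or more of them. Consequently, for $k \geq 2$,
\[
\sF(v_1) \cap \cdots \cap \sF(v_k) = \{x_F : \{v_1, \ldots, v_k\} \subseteq F\},
\]
which is nonempty precisely when $\{v_1, \ldots, v_k\}$ is contained in some maximal face of $K$; combined with non-isolation of vertices, this yields $K \cong N(\sF(K))$.

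To bound the comatching number, suppose $(z_1, \ldots, z_k; \sF(v_1), \ldots, \sF(v_k))$ is a comatching in $(X, \sF(K))$. If $k \leq 2$ the inequality $k \leq \max(2, \tau(V, K))$ is automatic, so assume $k \geq 3$. The key observation is that no $z_i$ can be a tag point $y_u$: such a point lies only in $\sF(u)$, yet $z_i$ is required to lie in $k - 1 \geq 2$ of the $\sF(v_j)$. Thus each $z_i = x_{F_i}$ for some maximal face $F_i$, and translating the induced-matching conditions gives $v_j \in F_i$ iff $j \neq i$; this certifies $\{v_1, \ldots, v_k\}$ as a comatching of size $k$ in $K$, so $k \leq \tau(V, K)$.

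The main obstacle will be choosing the right construction in part 2. The naive choice $\sF(v) = \{x_F : v \in F\}$ breaks down whenever two vertices of $K$ lie in exactly the same collection of maximal faces, since they would then be identified in $\sF(K)$ and destroy the isomorphism with $K$. The tag points $y_v$ resolve this but introduce a spurious comatching of size $2$ in degenerate cases (for instance when $K$ is a full simplex, so that $\tau(V, K) = 0$ but $\tau(X, \sF(K)) = 2$), which is exactly what the ``$\max(2, \cdot)$'' in the statement accommodates.
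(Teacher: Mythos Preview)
Your proof is correct and follows essentially the same approach as the paper: in part 1 you both translate a comatching in $N(\sF)$ to one in $(X,\sF)$ via the observation that each maximal face of the nerve is witnessed by a point of $X$, and in part 2 you both use the identical construction $\sF(v)=\{y_v\}\cup\{x_F:v\in F\}$ on $X=\mathcal{M}\sqcup V$, then argue that a comatching of size $\geq 3$ cannot use any tag point. Your write-up is in fact a bit more explicit than the paper's about why the tags $y_v$ are needed (to keep the map $v\mapsto\sF(v)$ injective) and why the $\max(2,\cdot)$ is unavoidable.
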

\begin{proof}
    1) Suppose $M$ is a comatching in $N(\sF)$. We claim that $M$ is also a comatching in $\sF$. Indeed, for any set $f \in M$, there is a maximal face $F$ of $N(\sF)$ such that $F \cap M = M \backslash \{f\}$. There must exist some $a \in X$ such that
    $$F = \{f \in \sF: a \in f\}.$$
    Therefore, $a$ lies in every set in $M \backslash \{f\}$, but not in $f$. So $M$ forms a comatching in $\sF$.

    2) Let $F$ be the set of maximal faces in $K$. Let $X = V \sqcup F$. For each $v \in V$, we associate the subset $F_v = \{v\} \cup \{f \in F: v \in f\}$ on $X$. Let $\sF(K)$ be the set family $\{F_v: v \in V\}$ on $X$. 
    
    We check that the map $\phi: s \mapsto F_s$ gives an isomorphism of complexes $K \cong N(\sF(K))$. If $f_0$ is a face in $K$, then $f_0$ is contained in some maximal face $f \in F$. Therefore, for any $s \in F_0$, we have $f \in \phi(s)$, so $f \in \bigcap_{s \in F_0} \phi(s)$, thus $\phi(f_0)$ is a face in the nerve complex by definition. On the other hand, suppose $\phi(f_0)$ is a face in $K$ for some $f_0 \subset V$. If $\abs{f_0} = 1$, then $f_0$ is a face in $K$ as $K$ has no isolated vertex. If $\abs{f_0} \geq 2$, there exists some $f \in V \sqcup F$ such that $f \in \bigcap_{s \in f_0} \phi(s)$. We must have $f \in T$, thus $s \in F$ for any $s \in f_0$. So $f_0 \subset F$, thus $f_0$ is a face in $V$, as desired.
    
    Furthermore, suppose $M$ is a comatching of size at least $3$ in $\sF(K)$. We check that $\phi^{-1}(M)$ is a comatching in $K$. Consider any $v \in \phi^{-1}(M)$. As $M$ is a comatching in $\sF(K)$, there is some $f \in V \sqcup F$ such that for each $F_u \in M$, $F_u$ contains $f$ if and only if $u \neq v$. As $\abs{M \backslash \{v\}} \geq 2$, at least two sets in $\sF(K)$ contains $F$, so we must have $f \in F$. Thus for $u \in \phi^{-1}(M)$, we have $u \in f \Leftrightarrow f \in F_u \Leftrightarrow u \neq v$. We conclude that $f \cap \phi^{-1}(M) = \phi^{-1}(M) \backslash \{v\}$, and $\phi^{-1}(M)$ is a comatching in $\sF(K)$, as desired.
\end{proof}
In light of part 2) of this proposition, we can restate Proposition \ref{prop:no-leray} in terms of the comatching number for simplicial complexes (note that isolated vertices don't affect the collapsible or Leray property).
\begin{proposition}
\label{prop:no-leray-II}
For every $d \geq 2$, there exists a vertex set $V$ and a simplicial complex $K$ on $V$, such that $\tau(V, K) \leq 2d$ and $K$ is not $(3d - 1)$-Leray thus also not $(3d - 1)$-collapsible.
\end{proposition}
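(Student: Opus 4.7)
The plan is to exhibit, for each $d \ge 2$, an explicit simplicial complex $K_d$ on a vertex set $V_d$ and verify the two required properties in sequence: first $\tau(V_d, K_d) \le 2d$, and second the failure of the $(3d-1)$-Leray condition via an induced subcomplex with non-trivial reduced real homology in some dimension $\ge 3d - 1$.

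First I would fix the construction. Following the base case $d = 2$ (attributed to Qiuyu Ren), the construction is combinatorial in nature — either an iterated join (or an analogous combinatorial operation) of a carefully chosen ``gadget'' complex $G$ whose comatching-face number $\tau^f(G)$ is small while its homology persists in a controlled dimension, or an explicit construction arising from polytope boundaries or chessboard-like complexes. I would lift the $d = 2$ example to general $d$ either inductively (building $K_d$ from $K_{d-1}$ by joining a gadget that raises $\tau$ by at most $2$ and the dimension of non-trivial homology by at least $3$) or by defining a single explicit family of complexes indexed by $d$.

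Second, to establish $\tau(V_d, K_d) \le 2d$, I would show that for any putative comatching $M \subseteq V_d$ of size $2d + 1$ some vertex $v \in M$ is trapped, i.e.\ every maximal face of $K_d$ containing $M \setminus \{v\}$ also contains $v$. For a join-based construction this reduces via the standard additivity $\tau(K_1 * K_2) = \max(\tau(K_1), \tau(K_2), \tau^f(K_1) + \tau^f(K_2))$ to verifying a comatching-face bound on each factor, which is a case analysis on the explicit max-face structure. Third, to identify an induced subcomplex witnessing the failure of the $(3d-1)$-Leray condition, I would take $L$ (often $K_d$ itself) and produce a non-trivial class in $\tilde H_i(L)$ for some $i \ge 3d - 1$, either by a direct chain-level argument, a Mayer--Vietoris computation, or the join-K{\"u}nneth-style formula $\tilde H_n(K * L) \cong \bigoplus_{i + j = n - 1} \tilde H_i(K) \otimes \tilde H_j(L)$ that places a non-vanishing class in the top degree.

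The main obstacle will be the construction itself. Most natural families — boundaries of cross-polytopes, skeleta of simplices, wedges or joins of small spheres — satisfy $\tau(K) \ge \text{Leray}(K)$ (indeed, for these examples the two quantities are essentially equal), so they cannot be used directly: we need a genuine gap of size $d$ between $\tau$ and the Leray number. The construction must therefore be intrinsically non-manifold and arranged so that the maximal-face incidence structure traps certain vertex subsets despite the persistent high-dimensional homology. Producing such a gadget and verifying the comatching bound is precisely where the bulk of the combinatorial work lies, and it is the step for which Ren's separate $d = 2$ argument is needed.
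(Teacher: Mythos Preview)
Your high-level plan is exactly the paper's: build $K$ as a $d$-fold join of a single base gadget, bound $\tau$ by subadditivity under join, and push homology up via the K\"unneth formula $\tilde H_n(K*L)\cong\bigoplus_{i+j=n-1}\tilde H_i(K)\otimes\tilde H_j(L)$. But the proposal is not yet a proof, because the only non-formal step---producing the gadget---is left blank. You say ``following the base case attributed to Qiuyu Ren'' and then list several possible sources (polytope boundaries, chessboard complexes, \ldots) without committing to any; you also correctly observe that the obvious candidates fail. The paper's gadget is completely concrete and is worth knowing: cover the torus $(\mathbb{R}/\mathbb{Z})^2$, tiled as a $4\times4$ grid, by the interiors of all sixteen $2\times2$ subsquares, and let $K_2$ be the nerve of this cover. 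Leray's nerve theorem gives $\tilde H_2(K_2)\cong\tilde H_2(\text{torus})\cong\mathbb{R}$, and a short case check on the maximal faces (identifying each $2\times2$ open set with its top-left cell, the maximal faces are again the $2\times2$ blocks) shows $\tau(K_2)=2$. Then $K_{2d}=K_2^{*d}$ has $\tau\le 2d$ and $\tilde H_{3d-1}(K_{2d})\neq 0$, so $K_{2d}$ itself witnesses the failure of $(3d-1)$-Leray.

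Two smaller corrections. First, your join formula $\tau(K_1*K_2)=\max(\tau(K_1),\tau(K_2),\tau^f(K_1)+\tau^f(K_2))$ is more than you need and uses undefined notation; the one-line observation that a comatching in $K*L$ restricts to comatchings in each factor already gives $\tau(K*L)\le\tau(K)+\tau(L)$, which suffices. Second, the gadget need not be ``intrinsically non-manifold'': the paper's $K_2$ is homotopy equivalent to a torus. What you actually need is just $\tau=2$ together with $\tilde H_2\neq 0$; each join then raises the top nonvanishing homology by $3$ while raising $\tau$ by only $2$, which is where the gap comes from.
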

We first verify this proposition for $d = 1$ with an explicit construction. We make use of Leray's celebrated nerve theorem, which allows us to easily compute the homology of certain nerve complexes.
\begin{theorem}[\cite{Leray}]
\label{thm:nerve}
Let $X$ be a topological space, and let $\sF = \{U_i\}$ be an open cover of $X$. Suppose all finite, non-empty intersections of sets in $\sF$ are contractible. Then the nerve complex $N(\sF)$ is homotopy-equivalent to $X$. Therefore, $\tH_i(N(\sF)) \cong \tH_i(X)$ for every $i \geq 0$.
\end{theorem}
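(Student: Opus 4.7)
The plan is to exhibit a common blow-up space $B$ that maps to both $X$ and to $|N(\sF)|$, and to show that each projection is a homotopy equivalence. Concretely, set
\[
B := \bigl\{(x, t) \in X \times |N(\sF)| : t \in |\sigma| \text{ for some face } \sigma \text{ of } N(\sF) \text{ with } x \in \bigcap_{i \in \sigma} U_i \bigr\},
\]
and let $p \colon B \to X$ and $q \colon B \to |N(\sF)|$ denote the two coordinate projections. The theorem follows once we check that both $p$ and $q$ are homotopy equivalences.

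For the projection $p$, I would use the paracompactness implicit in the hypotheses to fix a partition of unity $\{\phi_i\}$ subordinate to $\{U_i\}$ and define a section $s \colon X \to B$ by $s(x) := (x,\, \sum_i \phi_i(x) v_i)$, where $v_i$ is the vertex of $N(\sF)$ indexing $U_i$. This section is well-defined because $\{i : \phi_i(x) > 0\}$ is contained in $\{i : x \in U_i\}$, and the latter spans a face of $N(\sF)$. One has $p \circ s = \mathrm{id}_X$, and since the fiber $p^{-1}(x)$ is the closed simplex spanned by $\{v_i : x \in U_i\}$, a fiberwise straight-line homotopy in these simplices gives $s \circ p \simeq \mathrm{id}_B$.

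For $q$, the fiber over an interior point of an open simplex $\sigma^\circ$ is naturally homeomorphic to the intersection $\bigcap_{i \in \sigma} U_i$, which is contractible by the good-cover hypothesis. The plan is to identify $B$ with the homotopy colimit over the face poset of $N(\sF)$ of the diagram $\sigma \mapsto \bigcap_{i \in \sigma} U_i$; under this identification, $q$ becomes the projection to the homotopy colimit of the constant one-point diagram, which is $|N(\sF)|$ itself. The standard comparison principle for homotopy colimits then says that an objectwise weak equivalence of diagrams induces a weak equivalence between homotopy colimits, so the contractibility hypothesis upgrades directly to the conclusion that $q$ is a weak equivalence.

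The main obstacle is precisely this upgrade from fiberwise contractibility to a global homotopy equivalence: $q$ is not a fibration in general, so one cannot simply invoke a long exact sequence. Making the homotopy colimit identification rigorous requires either a careful skeletal induction on $N(\sF)$ using repeated Mayer--Vietoris gluing together with an excision-type lemma (a map that restricts to a weak equivalence over each member of a locally finite open cover is itself a weak equivalence), or, more cleanly, an appeal to Segal's realization lemma applied to the simplicial resolution coming from the \v{C}ech-type nerve. Either route ultimately uses the paracompactness/numerable-cover assumption, which is what allows the local contractibility data to patch together into global homotopy equivalences rather than merely local ones.
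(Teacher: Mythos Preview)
The paper does not prove this statement at all: Theorem~\ref{thm:nerve} is quoted as Leray's classical nerve theorem, with a bare citation to \cite{Leray} and no argument supplied. It is used as a black box in the proof of Proposition~\ref{prop:d=2}. So there is no ``paper's own proof'' to compare against.

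That said, your outline is a recognizable sketch of one of the standard modern proofs (the ``blow-up'' or double mapping cylinder approach, as in Hatcher or Bott--Tu). The treatment of $p$ is essentially complete: the partition-of-unity section and the fiberwise straight-line homotopy are correct and need no further justification. The treatment of $q$ is, as you yourself flag, only a plan. Two comments there. First, your space $B$ as defined is a subspace of $X \times |N(\sF)|$, not literally the homotopy colimit of $\sigma \mapsto \bigcap_{i\in\sigma} U_i$; the identification between the two is itself a step that requires the good-cover hypothesis (open intersections, so that the quotient map from the bar construction to your $B$ is a homotopy equivalence). Second, of the two routes you mention, the skeletal induction is the more self-contained one and can be made rigorous with only the gluing lemma for cofibrations plus the observation that each simplex is attached along a contractible piece; invoking Segal's realization lemma is correct but imports more machinery than the statement needs.

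In short: your approach is sound and standard, but what you have written is a proof \emph{plan}, not a proof. Since the paper treats the result as a citation, either (a) do the same, or (b) if you want to include a proof, carry the skeletal induction through explicitly rather than gesturing at two possible endgames.
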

\begin{proposition}
    \label{prop:d=2}
    There exists a vertex set $V$ and a simplicial complex $K$ on $V$ such that $\tau(V, K) = 2$, but $K$ is not $2$-Leray.
\end{proposition}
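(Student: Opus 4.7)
\noindent
The plan is to invoke Leray's nerve theorem (Theorem~\ref{thm:nerve}) and construct $K$ as the nerve of a carefully chosen good open cover. Concretely, I aim to find a topological space $X$ and an open cover $\sF = \{U_i\}_{i\in I}$ of $X$ such that (i) every non-empty finite intersection $\bigcap_{i\in S}U_i$ is contractible, (ii) $\tilde{H}_j(X)\neq 0$ for some $j\geq 2$, and (iii) the nerve $N(\sF)$ satisfies $\tau(V, N(\sF))=2$. By Leray's theorem, (i) and (ii) force $\tilde{H}_j(N(\sF))\cong \tilde{H}_j(X)\neq 0$, so setting $K:=N(\sF)$ produces a complex that is not $2$-Leray.

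The crux of the argument is a combinatorial translation of the comatching condition into the language of the cover. A triple $\{i,j,k\}\subset I$ is a size-$3$ comatching in $N(\sF)$ precisely when for each element (say $i$) there is a maximal index set $S\supseteq\{j,k\}$ with $i\notin S$ and $\bigcap_{s\in S}U_s\neq \emptyset$. Hence $\tau(V,N(\sF))\leq 2$ will follow once we arrange, for every triple $\{i,j,k\}$, that some pair admits an ``absorption''
\[
U_a \cap U_b = \emptyset \quad \text{or} \quad U_a \cap U_b \subseteq U_c \qquad (\{a,b,c\}=\{i,j,k\}).
\]
Indeed, the containment $U_a\cap U_b\subseteq U_c$ means any non-empty $\bigcap_{s\in S}U_s$ with $\{a,b\}\subset S$ is already contained in $U_c$, so $S$ can be extended by $c$ and hence no maximal $S$ omits $c$; this blocks the comatching condition for the vertex $c$ of the triple. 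The lower bound $\tau\geq 2$ will come from any pair of vertices separated by two distinct maximal faces.

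The main obstacle is that the most natural good covers of $S^2$ already fail condition (iii). For instance, the symmetric cover by four disks neighbouring the faces of an inscribed tetrahedron yields the nerve $\partial\Delta^3$, in which every 2-face is a size-$3$ comatching, so $\tau=3$. To bring the comatching number down to $2$ one must use an asymmetric cover: an explicit configuration of open sets of widely varying sizes and shapes, arranged so that pairwise overlaps are systematically absorbed into larger members of the cover. I would search for such a cover of $S^2$ (or, if necessary, of a higher-dimensional sphere to draw on $\tilde{H}_{>2}$), perhaps by inductively placing small ``patch'' sets inside the overlap regions of larger sets; each small patch serves to witness a containment for several triples at once.

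The final step would then be verification by direct case analysis: check contractibility of every finite non-empty intersection (to apply Leray), confirm the absorption condition for every triple of indices (to conclude $\tau\leq 2$), and exhibit a size-$2$ comatching (to conclude $\tau=2$). I expect the contractibility check to be routine once the cover is concrete, while the verification of absorption for all triples is the technically most delicate component.
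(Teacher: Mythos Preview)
Your overall strategy---realize $K$ as the nerve of a good open cover of a space with nontrivial $\tilde H_2$ and then verify $\tau=2$ via your ``absorption'' criterion---is exactly the one the paper uses, and your translation of size-$3$ comatchings into containments $U_a\cap U_b\subseteq U_c$ is correct.

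The gap is in the proposed search. Your patch idea cannot bring $\tau$ down. Suppose $K=N(\sF)$ and $K'=N(\sF\cup\{U_p\})$. If $M\subset V$ is a comatching in $K$ and $F_v$ is a maximal face of $K$ with $F_v\cap M=M\setminus\{v\}$, then in $K'$ either $F_v$ is still maximal or $F_v\cup\{p\}$ is: any strict superface $G$ of $F_v\cup\{p\}$ would have $G\setminus\{p\}$ a face of $K$ properly containing $F_v$, impossible. Either way the new maximal face meets $M$ in $M\setminus\{v\}$, so $M$ remains a comatching in $K'$. Thus $\tau(K')\geq\tau(K)$: sprinkling small sets into the overlaps of a cover that already has $\tau\geq3$ (such as the tetrahedral cover of $S^2$ you mention) can never achieve $\tau=2$. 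The absorption relations must be present globally from the outset, not installed afterwards.

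The paper's construction does this on the torus $(\RR/\ZZ)^2$ rather than $S^2$. Tile it by a $4\times4$ grid and let $\sF$ consist of the sixteen open $2\times2$ subsquares; no patches, all sets congruent. Identifying each open set with its top-left grid cell, the maximal faces of $N(\sF)$ are precisely the $2\times2$ blocks of grid cells. Any three cells that are pairwise contained in a common $2\times2$ block must form a right isoceles ``L'', say $\{1,2,5\}$; then the only $2\times2$ block through the diagonal pair $\{2,5\}$ already contains $1$ (equivalently $U_2\cap U_5\subset U_1$, your absorption), so no size-$3$ comatching exists. The nerve theorem gives $\tilde H_2(K)\cong\tilde H_2(T^2)\cong\RR$, completing the proof. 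The moral is that the $\ZZ/4\times\ZZ/4$ combinatorics of the torus supplies the absorption for free, whereas on $S^2$ you would have to manufacture it by hand, and your proposed mechanism for doing so does not work.
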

\begin{proof}
\begin{figure}

\begin{tikzpicture}[scale=1.0, font=\small]
\tikzset{every node/.style={}}
\foreach \x in {0,...,4} {
    \draw (\x,0) -- (\x,4);
}
\foreach \y in {0,...,4} {
    \draw (0,\y) -- (4,\y);
}


\newcounter{num}
\setcounter{num}{1}

\foreach \row in {3.5,2.5,1.5,0.5} {
    \foreach \col in {0.5,1.5,2.5,3.5} {
        \node [rectangle] at (\col,\row) {\arabic{num}};
        \stepcounter{num}
    }
}
\draw[red, thick] (0.05,3.95) rectangle (1.95,2.05);

\draw[blue, thick] (0.1,3.9) rectangle (1.9,3.1);

\draw[blue, thick] (0.1,0.1) rectangle (1.9,0.9);
\end{tikzpicture}

\caption{The regions in red and blue are members of $\sF$. To visualize $K = N(\sF)$, we identify the red region with grid square $1$, and the blue region with grid square $13$.}
\label{fig:example}
\end{figure}
Let $X$ be the surface of the torus $(\RR / \ZZ)^2$. We partition $X$ into a $4 \times 4$ square grid with the sides glued together. For the set of $2 \times 2$ subsquare $S$ of the grid, let $S^\circ$ be its interior. Let $\sF$ be the family of $S^\circ$'s. Define $K$ to be the nerve complex $N(\sF)$ of $\sF$. See Figure~\ref{fig:example} above for an illustration.

We first check that $\tau(\sF, K) = 2$. We identify each $S^\circ \in \sF$ with its top-left grid square. Under this identification, the maximal faces of $K$ are the $2 \times 2$ subsquares of the grid. Suppose we have a comatching $M$ of size $3$ in $K$. Then each pair of corresponding grid squares in $M$ must lie in a common maximal face, which is possible only if the corresponding grid squares in $M$ form a right isoceles triangle. Without loss of generality, we may assume that $M$ consists of grid squares $\{1,2,5\}$. Then there is no maximal face that contains $\{2, 5\}$ but not $1$, contradiction.

We now compute the homology of $K$. As the non-empty intersections of subfamilies of $\sF$ are  contractible, Leray's nerve theorem shows that $K$ is homotopic-equivalent to $X$. Therefore, we have $\tH_2(K) \cong \tH_2(X) \cong \tH_2((\RR / \ZZ)^2) \cong \RR$, so $K$ is not $2$-Leray. 
\end{proof}




We generalize the construction to arbitrary comatching number by using the join \cite[Definition 2.16]{Koz} of two simplicial complexes. 
\begin{definition}
    Let $K$ be a simplicial complex on $V$ and $L$ be a simplicial complex on $W$. The \emph{join} of $K$ and $L$, denoted by $K*L$, is the simplicial complex on $V \sqcup W$ with a face $F \sqcup G$ for each face $F$ in $K$ and face $G$ in $L$.
\end{definition}
We now illustrate how the comatching number and the homology groups behave under joins.
\begin{proposition}
    \label{prop:join-comatching}
    We have $\tau(V \sqcup W, K*L) \leq \tau(V, K) + \tau(W, L)$.
\end{proposition}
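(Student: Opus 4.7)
The plan is to take a comatching $M$ in $K*L$, split it as $M_V = M \cap V$ and $M_W = M \cap W$, and show that $M_V$ is a comatching in $K$ and $M_W$ is a comatching in $L$. Since $|M| = |M_V| + |M_W|$, this will give the desired inequality.

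The key observation I would rely on is that the maximal faces of $K*L$ are precisely the sets of the form $F \sqcup G$ where $F$ is a maximal face of $K$ and $G$ is a maximal face of $L$. This follows directly from the definition of the join: any face of $K*L$ has this product form, and a face $F \sqcup G$ is maximal exactly when neither $F$ nor $G$ admits a proper enlargement inside its own complex.

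Given this, for each $v \in M_V$ I would invoke the comatching property of $M$ in $K*L$ to obtain a maximal face $\sigma_v = F_v \sqcup G_v$ of $K*L$ with $\sigma_v \cap M = M \setminus \{v\}$. Intersecting with $V$ and $W$ separately yields $F_v \cap M_V = M_V \setminus \{v\}$ and $G_v \cap M_W = M_W$, and since $F_v$ is a maximal face of $K$ this shows that $M_V$ is a comatching in $K$. The symmetric argument using $v \in M_W$ shows $M_W$ is a comatching in $L$. (If either $M_V$ or $M_W$ is empty, the corresponding inequality is trivially $0 \leq \tau$.) Summing the two bounds $|M_V| \leq \tau(V, K)$ and $|M_W| \leq \tau(W, L)$ gives the result.

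There isn't a substantial obstacle here: the only thing to be slightly careful about is the characterization of maximal faces of $K*L$, which is where the argument hinges. Once that is in hand, the split is essentially forced and the proof is a short bookkeeping exercise.
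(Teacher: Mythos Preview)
Your proposal is correct and follows exactly the approach of the paper: split a comatching $P$ in $K*L$ into $P\cap V$ and $P\cap W$ and observe that each part is a comatching in the respective factor. The paper states this in one line without justifying the maximal-face decomposition of $K*L$, so your version simply fills in the details the paper omits.
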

\begin{proof}
    If $P$ is a comatching in $K*L$, then $P \cap V$ is a comatching in $K$ and $P \cap W$ is a comatching in $L$. So $\abs{P} = \abs{P \cap V} + \abs{P \cap W} \leq \tau(K) + \tau(L)$.
\end{proof}
\begin{definition}
    We say $K$ is $d$-good if $\tH_d(K) \neq 0$ and $\tH_i(K) = 0$ for every $i > d$.
\end{definition}
\begin{proposition}
    \label{prop:join-homology}
    If $K$ is $d$-good and $L$ is $d'$-good, with $d, d' \geq 1$, then $K * L$ is $(d + d' + 1)$-good.
\end{proposition}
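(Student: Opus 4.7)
The plan is to reduce the claim to the standard Künneth-type formula for the reduced homology of a simplicial join. Passing to geometric realizations gives $|K * L| \cong |K| * |L|$, and for any two non-empty CW complexes $X, Y$ the topological join satisfies
\[
\tilde{H}_n(X * Y) \;\cong\; \bigoplus_{\substack{i + j = n - 1 \\ i, j \geq 0}} \tilde{H}_i(X) \otimes \tilde{H}_j(Y)
\]
when coefficients are taken in a field; over $\RR$ there is no Tor contribution. One standard derivation uses the Mayer--Vietoris sequence for the decomposition $X * Y = (CX \times Y) \cup (X \times CY)$ with intersection $X \times Y$, combined with the Künneth theorem and the contractibility of the cones. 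The hypothesis $d, d' \geq 1$ guarantees that both $K$ and $L$ are non-empty (and in particular there is no $\tilde{H}_{-1}$ contribution to worry about), so the formula applies directly to $K * L$.

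Granted this formula, the proof becomes pure index arithmetic. First I would verify the vanishing: for any $n > d + d' + 1$ and any decomposition $i + j = n - 1$ with $i, j \geq 0$, we must have either $i > d$ or $j > d'$, whence $\tilde{H}_i(K) \otimes \tilde{H}_j(L) = 0$ by $d$- and $d'$-goodness. Summing over all decompositions gives $\tilde{H}_n(K * L) = 0$ for all $n > d + d' + 1$.

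Next I would identify the top non-vanishing group. When $n = d + d' + 1$, the condition $i + j = d + d'$ together with $i \leq d$ and $j \leq d'$ (again from goodness) forces $(i, j) = (d, d')$, so only one summand survives and
\[
\tilde{H}_{d + d' + 1}(K * L) \;\cong\; \tilde{H}_d(K) \otimes \tilde{H}_{d'}(L),
\]
which is non-zero since both tensor factors are non-zero by hypothesis. Combining the two steps shows $K * L$ is $(d + d' + 1)$-good.

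The only genuine step is invoking the join formula; all that remains is bookkeeping over the index set. Since the formula is classical (see, e.g., \cite{Koz}), no real obstacle arises, and the assumption $d, d' \geq 1$ is only used to keep the formula in its unreduced-free form.
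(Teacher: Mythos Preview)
Your proof is correct and follows essentially the same approach as the paper: both invoke the K\"unneth-type formula $\tilde{H}_k(K*L)\cong\bigoplus_{i+j=k-1}\tilde{H}_i(K)\otimes\tilde{H}_j(L)$ (the paper cites \cite{KalaiMeshulam} and \cite{Whitehead} rather than \cite{Koz}) and then do the same index bookkeeping to isolate the single surviving summand at $k=d+d'+1$. Your version spells out slightly more context around the formula, but the argument is identical.
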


\begin{proof}
    Fix any $k \geq d + d' + 1$.  By an analog of K\"{u}nneth's theorem for join (see e.g. \cite[Section 3]{KalaiMeshulam}, \cite{Whitehead}), we have
    $$\tH_{k}(K * L) \cong \bigoplus_{i + j = k - 1} \tH_{i}(K) \otimes \tH_j(L).$$
    By the goodness assumption on $K$ and $L$, all terms in the direct sum vanish except when $k = d + d' + 1$, $i = d$ and $j = d'$. So $\tH_{k}(K * L)$ is nonzero when $k = d + d' + 1$ and zero when $k > d + d' + 1$.
\end{proof}
\begin{proof}[Proof of Proposition~\ref{prop:no-leray-II}]
    In Proposition~\ref{prop:d=2}, we constructed a simplicial complex $K_2$ such that $\tau(K_2) = 2$ and $K_2$ is $2$-good. Now we take $K_{2d} = K_2 * \cdots * K_2$, where there are $d$ copies of $K_2$. By Proposition~\ref{prop:join-comatching}, we have 
    $$\tau(K_{2d}) \leq d\tau(K_2) \leq 2d.$$ 
    By Proposition~\ref{prop:join-homology}, $K_{2d}$ is $(3d - 1)$-good, thus not $(3d - 1)$-Leray. So $K_{2d}$ satisfies the desired properties.
\end{proof}

We find it interesting to see whether bounded comatching number implies $d$-collapsible or $d$-Leray for some $d$. We already don't know the answer to the following question.
\begin{question}
\label{conj:no-leray?}
   Is there an absolute constant $d  > 0$ such that the following holds: for every set family $\sF$ with $\tau(\sF) \leq 2$, the nerve complex $N(\sF)$ is always $d$-Leray?
\end{question}

\end{document}